\documentclass{amsart}
\usepackage{amsthm, amssymb, amsfonts, amscd}
\usepackage{graphicx}
\usepackage{stmaryrd}
\usepackage{bbold}
\usepackage{verbatim}
\usepackage[all,arc]{xy}
\usepackage{enumerate}
\usepackage{mathrsfs}
\usepackage{xcolor}
\usepackage{hyperref}
\usepackage{soul}
\usepackage{comment}
\usepackage{esint}
\usepackage[margin=1.0in]{geometry}
\usepackage{graphics}
\usepackage{enumitem}
\usepackage{tikz-cd}
\usepackage[T1]{fontenc}
\usepackage{mathtools}
\usepackage{setspace}
\allowdisplaybreaks
\DeclareMathOperator{\im}{im}
\DeclareMathOperator{\Spec}{Spec}

\DeclareMathOperator{\Core}{Core}
\DeclareMathOperator{\length}{length}
\DeclareMathOperator{\covol}{covol}

\newcommand{\mr}{\mathrm}
\newcommand{\Ind}{{\mr{Ind}}}
\newcommand{\Res}{{\mr{Res}}}
\newcommand{\Gal}{\mr{Gal}}
\newcommand{\Hom}{\mr{Hom}}

\newcommand{\GL}{\mr{GL}}

\newcommand{\tor}{\mr{tor}}

\newcommand{\bb}{\mathbb}
\newcommand{\bZ}{\bb{Z}}
\newcommand{\bQ}{\bb{Q}}
\newcommand{\bR}{\bb{R}}
\newcommand{\bC}{\bb{C}}
\newcommand{\G}{\bb{G}}
\newcommand{\bA}{\bb{A}}
\newcommand{\cO}{\mathcal{O}}
\newcommand{\trivrep}{\mathbf{1}}
\newcommand{\ol}{\overline}

\theoremstyle{definition}
\newtheorem{theorem}{Theorem}[section]

\newtheorem{proposition}[theorem]{Proposition}
\newtheorem{lemma}[theorem]{Lemma}

\begin{document}

\title[An upper bound for counting algebraic tori]{An upper bound for counting algebraic tori over $\bQ$ by Artin conductor}
\author{Jungin Lee}
\date{}
\address{J. Lee -- Department of Mathematics, Ajou University, Suwon 16499, Republic of Korea}
\email{jileemath@ajou.ac.kr}

\begin{abstract}
Let $N_n^{\tor}(X)$ be the number of isomorphism classes of $n$-dimensional algebraic tori over $\bQ$ whose Artin conductor is bounded by $X$. We prove that there is an absolute constant $C>0$ such that, for every positive integer $n \ge 2$, $N_n^{\tor}(X)\ll_n X^{\exp(C(\log n)^2)}$. The proofs of the main results were developed through an iterative dialogue with ChatGPT 5.6 Pro.
\end{abstract}
\maketitle

\vspace{-7mm}
\section{Introduction} \label{Sec1}

\subsection{Counting algebraic tori over \texorpdfstring{$\bQ$}{Q} by Artin conductor} \label{Sub11}

Throughout this paper, every number field is taken to be a subfield of a fixed algebraic closure $\ol{\bQ}$ of $\bQ$. All asymptotic statements are taken as $X\to\infty$. If nonnegative functions $f(X)$ and $g(X)$ also depend on fixed parameters $x_1,\ldots,x_m$, then $f(X)\ll_{x_1,\ldots,x_m}g(X)$ means that, after $x_1,\ldots,x_m$ are fixed, there is a positive constant $C=C(x_1,\ldots,x_m)$ such that $f(X) \le C g(X)$ for all sufficiently large $X$. 
For a number field $K$, let $D_K$ be the absolute value of its discriminant. For an integer $n \ge 2$, denote by $N_n(X)$ the number of isomorphism classes of degree-$n$ number fields $K$ satisfying $D_K \le X$. 

Counting number fields by discriminant is a central problem in arithmetic statistics. A folklore conjecture predicts that, for every $n\ge2$, there is a constant $c_n>0$ such that $N_n(X)\sim c_nX$. This conjecture is known for $n\le5$: the case $n=2$ is elementary, the case $n=3$ is due to Davenport--Heilbronn \cite{DH71} and the cases $n=4, 5$ are due to Bhargava \cite{Bha05, Bha10}. For general $n$, upper bounds have been obtained by Schmidt \cite{Sch95}, Ellenberg--Venkatesh \cite{EV06}, Couveignes \cite{Cou20}, and Lemke Oliver--Thorne \cite{LOT22}. The best known upper bound for general $n$ \cite[Theorem 1.1]{LOT22} is $N_n(X)\ll_n X^{c(\log n)^2}$ for an explicit absolute constant $c>0$.

We now consider the analogous counting problem for algebraic tori over $\bQ$, with the Artin conductor playing the role of the discriminant. Let $T$ be an $n$-dimensional torus over $\bQ$. Its character group
$$
X^*(T):=\Hom_{\ol{\bQ}}(T_{\ol{\bQ}},\G_{m,\ol{\bQ}})
$$
is a free $\bZ$-module of rank $n$ with a continuous action of $G_{\bQ}:=\Gal(\ol{\bQ}/\bQ)$ that gives a representation 
$$
\rho_T: G_{\bQ} \to \GL(X^*(T)) \cong \GL_n(\bZ)
$$ 
with a finite image. We note that the functor $T \mapsto X^*(T)$ is an anti-equivalence between the category of algebraic tori over $\bQ$ and the category of $G_{\bQ}$-lattices.
If $L$ is the splitting field of $T$, then $\ker(\rho_T)=G_L:=\Gal(\ol{\bQ}/L)$ and $G_T:=\im(\rho_T)\cong\Gal(L/\bQ)$. We write $C(T)$ for the Artin conductor of the rational representation $\rho_{T, \bQ} : G_{\bQ} \to \GL (X^*(T)_{\bQ}) \cong \GL_n(\bQ)$. Equivalently, $C(T)$ is the Artin conductor of the induced representation $\Gal(L/\bQ) \to \GL_n(\bQ)$. We refer to Neukirch \cite[Section VII.11]{Neu99} for the basic properties of Artin conductors.

For a number field $K$ of degree $n$, the Weil restriction $R_{K/\bQ}\G_m$ is an $n$-dimensional torus over $\bQ$ and satisfies $C(R_{K/\bQ}\G_m)=D_K$. (See \cite[Section 1.2]{Lee26} for more details.) Thus counting $n$-dimensional tori over $\bQ$ by Artin conductor is a natural generalization of counting degree-$n$ number fields by discriminant. Let $N_n^{\tor}(X)$ denote the number of isomorphism classes of $n$-dimensional tori over $\bQ$ such that $C(T) \le X$. The author \cite[Conjecture 3.1]{Lee26} conjectured that, for every $n \ge 1$, there is a constant $c_n>0$ such that
\begin{equation} \label{eq1a}
N_n^{\tor}(X)\sim c_nX(\log X)^{n-1}.
\end{equation}

For a finite subgroup $H\ne1$ of $\GL_n(\bZ)$, let $N_n^{\tor}(X;H)$ be the number of tori counted by $N_n^{\tor}(X)$ for which $G_T$ is conjugate to $H$ in $\GL_n(\bZ)$. We have
\begin{equation} \label{eq1a2}
N_n^{\tor}(X)=\sum_H N_n^{\tor}(X;H)+\mathbf{1}_{X \ge 1},
\end{equation}
where $H$ runs through a set of representatives for the conjugacy classes of finite nontrivial subgroups of $\GL_n(\bZ)$. The term $\mathbf{1}_{X \ge 1}$ accounts for the split torus $\G^n_m$. This is a finite sum because $\GL_n(\bZ)$ has only finitely many conjugacy classes of finite subgroups \cite[Corollary 4.8]{Bor19}. 
There is also a conjecture on the asymptotics of $N_n^{\tor}(X;H)$, which is an analogue of Malle's conjecture \cite[Conjecture 3.3]{Lee26}. We remark that this follows from a more general conjecture of Ellenberg--Venkatesh \cite[Question 4.3 and Example 4.4]{EV05}.

The conjecture \eqref{eq1a} is elementary for $n=1$. Indeed, every $1$-dimensional torus over $\bQ$ is either $\G_m$ or the norm one torus $R^{(1)}_{K/\bQ} \G_m := \ker(\mr{Nm} : R_{K/\bQ}\G_m \to \G_m)$ for a quadratic field $K$ by \cite[Example 6 in Section 4.9]{Vos98}. Since $C(R^{(1)}_{K/\bQ} \G_m) = D_K$, we have
\begin{equation*}
N_1^{\tor}(X) = N_2(X) + \mathbf{1}_{X \ge 1} = \frac{6}{\pi^2}X + O(X^{\frac{1}{2}}).
\end{equation*}

The author \cite{Lee23, Lee26} studied asymptotic upper bounds for $N_2^{\tor}(X)$ and $N_3^{\tor}(X)$. The unconditional estimates are
$$
N_2^{\tor}(X) \ll X^{1+\frac{\log 2+o(1)}{\log\log X}} \quad \text{and} \quad N_3^{\tor}(X)\ll X^{1+\frac{\log 2+o(1)}{\log\log X}}.
$$
Under the Cohen--Lenstra heuristics for $p=3$ \cite[(C6) and (C10)]{CL84}, these improve to
$$
N_2^{\tor}(X)\ll X(\log X)^{1+o(1)} \quad\text{and}\quad N_3^{\tor}(X)\ll X(\log X)^4\log\log X.
$$

Those proofs rely on explicit classifications of the finite subgroups of $\GL_2(\bZ)$ and $\GL_3(\bZ)$, together with conductor formulas for the corresponding torus types. However, such a case-by-case analysis becomes too complicated in higher dimensions. The aim of this paper is to obtain an asymptotic upper bound for $N_n^{\tor}(X)$ for arbitrary $n$ without relying on a classification of $n$-dimensional tori over $\bQ$ according to the finite subgroups of $\GL_n(\bZ)$.

\subsection{Main results} \label{Sub12}

The following theorem is the main result of the paper.

\begin{theorem} \label{thm1a}
There is an absolute constant $C>0$ such that, for every positive integer $n \ge 2$,
$$
N_n^{\tor}(X) \ll_n X^{\exp(C(\log n)^2)}.
$$
\end{theorem}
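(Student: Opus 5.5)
The plan is to reduce the count of tori to a count of number fields of controlled degree, and then apply the bound $N_d(Y)\ll_d Y^{c(\log d)^2}$ of Lemke Oliver--Thorne \cite[Theorem 1.1]{LOT22}.

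\textbf{Step 1: reduce to counting splitting fields.} By \eqref{eq1a2} it suffices to bound each $N_n^{\tor}(X;H)$ separately, since there are only $O_n(1)$ classes $H$. A torus with $G_T$ conjugate to $H$ is determined by two pieces of data. The first is its splitting field $L$, with $\Gal(L/\bQ)\cong H$. The second is an identification of $\Gal(L/\bQ)$ with $H\subset\GL_n(\bZ)$, taken up to $\mathrm{Aut}(H)$ and $\GL_n(\bZ)$-conjugation. The second datum takes only $O_n(1)$ values. Hence it is enough to count Galois fields $L$ with group $H$ that arise from some $T$ with $C(T)\le X$.

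\textbf{Step 2: find a small faithful permutation representation.} The key group-theoretic input I would aim for is the following. Every finite $H\subset\GL_n(\bZ)$ has a core-free subgroup $S$ of index $d=[H:S]\le\exp(A(\log n)^2)$, for an absolute constant $A$.

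The first attempt is to exploit the faithful action of $H$ on $X^*(T)_{\bQ}$. I would decompose this space into $\bQ$-irreducible constituents $W_j$ with kernels $K_j$; it suffices to treat each faithful quotient $H/K_j\subset\GL(W_j)$ and take $S=\bigcap_j S_j$, or more precisely a product over constituents. For an irreducible constituent, I would use the structure of finite subgroups of $\GL_m(\bQ)$ through Clifford theory. Imprimitivity reduces the problem to a primitive group $P$ acting on blocks, wreathed with smaller groups. Primitive finite subgroups of $\GL_m(\bQ)$ are quite restricted: their normal abelian subgroups are cyclic, and their generalized Fitting subgroups are controlled. From this one can bound their minimal faithful permutation degree quasi-polynomially in $m$. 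The recursion over block decompositions should produce the $(\log n)^2$ in the exponent.

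\textbf{Main obstacle.} This step is where I expect the real difficulty. The naive choices all fail to give a quasi-polynomial bound:
\begin{itemize}
\item orbits of minimal lattice vectors can have size exponential in $n$;
\item $|H|$ itself can be as large as $2^n n!$.
\end{itemize}

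\textbf{Step 3: compare discriminant and conductor.} Let $K=L^S$, which has degree $d$ and Galois closure $L$ because $S$ is core-free. Let $G_i$ be the ramification groups at a prime $p$. The permutation representation $\bQ[H/S]$ has conductor exponent
$$\sum_i \frac{|G_i|}{|G_0|}\bigl(d-\#\{G_i\text{-orbits}\}\bigr)\le (d-1)\sum_{i:\,G_i\ne1}\frac{|G_i|}{|G_0|}.$$
On the other hand, $X^*(T)_{\bQ}$ is faithful, so every nontrivial $G_i$ has $\mathrm{codim}\,V^{G_i}\ge1$, where $V=X^*(T)_{\bQ}$. This gives
$$f_p(\rho_{T,\bQ})\ge\sum_{i:\,G_i\ne1}\frac{|G_i|}{|G_0|}.$$
By the conductor--discriminant formula, $D_K\le C(T)^{d-1}$.

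\textbf{Step 4: conclude.} Each admissible $L$ is the Galois closure of one of the $O_n(1)$ conjugate fields $K$, so
$$N_n^{\tor}(X;H)\ll_n N_d(X^{d})\ll_n X^{d\,c(\log d)^2}.$$
With $d\le\exp(A(\log n)^2)$, the exponent $d\,c(\log d)^2$ is at most $\exp(C(\log n)^2)$ for a suitable absolute constant $C$. This gives Theorem~\ref{thm1a}.
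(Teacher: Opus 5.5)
Your Steps 1 and 3 are fine, but Step 2 is false as stated once $X^*(T)_{\bQ}$ is reducible, and Step 4 depends on it. Take distinct primes $p_1,\ldots,p_n$ and $T=\prod_j R^{(1)}_{\bQ(\sqrt{p_j})/\bQ}\G_m$. Then $G_T=H=\{\mathrm{diag}(\pm1,\ldots,\pm1)\}\cong C_2^n$. Since $H$ is abelian, every subgroup is its own core, so the only core-free subgroup is $S=1$ and $d=|H|=2^n$. Step 4 then gives only $X^{2^n c(n\log 2)^2}$. This is what your $S=\bigcap_j S_j$ does in general: the indices multiply over the irreducible constituents. No other single field helps, because you need $L$ to be the Galois closure of $K=L^S$, which here forces $K=L$.

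\textbf{The missing idea.} You need a way of combining constituents in which the exponents are maximized rather than added. The paper's proof of Theorem~\ref{thm1c} never passes to a single field. Instead:
\begin{itemize}
\item $\varphi$ is determined by $(\varphi_1,\ldots,\varphi_r)$, because $H\hookrightarrow\prod_i H_i$;
\item the conductor factors as $C_V(\varphi)=\prod_i C_{W_i}(\varphi_i)^{a_i}$;
\item a dyadic decomposition shows that bounds $N(Y;H_i,W_i)\ll Y^{\beta_i}$ give $N(X;H,V)\ll X^{\max_i\beta_i/a_i}(\log X)^{r-1}$.
\end{itemize}
Your Steps 3--4 can be run constituent by constituent. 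If $S_i\le H_i$ is core-free of index $e_i$, your ramification computation (using that $H_i$ acts faithfully on $W_i$) gives $D_{F_i}\le C_{W_i}(\varphi_i)^{e_i-1}$ for $F_i=L_i^{S_i}$. Lemke Oliver--Thorne then gives $\beta_i\le e_ic(\log e_i)^2$.

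\textbf{The irreducible case.} You still need Step 2 for a single irreducible constituent, and your sketch does not establish it. The phrase ``generalized Fitting subgroups are controlled'' has to be replaced by a quantitative Jordan-type theorem such as Weisfeiler's (Lemma~\ref{lem3e}). This must be combined with an imprimitive recursion as in Lemma~\ref{lem3f} and the bound $\Omega(\Gamma)\le 2d^2$ coming from rationality. Alternatively, it follows from Theorem~\ref{thm1b}: the stabilizer of a covector $\ell$ with $|G\cdot\ell|\le\exp(C(\log 2d)^2)$ is core-free, because $G\cdot\ell$ spans the irreducible $U^\vee$ and $U$ is faithful.

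\textbf{Comparison.} With these repairs your argument becomes a genuine alternative to the paper's. Lemke Oliver--Thorne replaces the tuning-lattice, Minkowski and invariant-map machinery of Section~\ref{Sec2}. The price is a per-constituent exponent of roughly $\mu_i c(\log\mu_i)^2$ instead of $\mu_i/2$, which is still $\exp(O((\log n)^2))$.
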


Although the exponent $\exp(C(\log n)^2)$ in Theorem \ref{thm1a} is substantially larger than the exponent $c(\log n)^2$ in the best known upper bound for $N_n(X)$, it is still subexponential in $n$. The main representation-theoretic input in the proof of Theorem \ref{thm1a} is the following theorem.

\begin{theorem} \label{thm1b}
There is an absolute constant $C>0$ such that, for every finite group $G$ and every faithful irreducible rational $G$-representation $U$ of dimension $d$, there exists $0\ne\ell\in U^\vee$ satisfying
$$
|G\cdot\ell|\le\exp(C(\log 2d)^2).
$$
\end{theorem}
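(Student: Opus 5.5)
The plan is an induction on $d$. It uses Clifford theory to reduce to the case where $U$ is primitive over $\bQ$, and then the structure of primitive linear groups (generalized Fitting subgroup, tensor decompositions) to handle that case. Throughout, write $f(d)$ for the best bound on $\min_{\ell\ne0}|G\cdot\ell|$ over all pairs $(G,U)$ with $\dim U\le d$. The goal is $f(d)\le\exp(C(\log 2d)^2)$.

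\emph{Step 1 (imprimitive case).} Suppose $U\cong\Ind_H^G W$ for a proper subgroup $H$ of index $k\ge2$ and a rational $H$-representation $W$ of dimension $d/k$.
\begin{itemize}
\item $W$ is irreducible, since $U$ is.
\item Replace $H$ by $H/\ker(H\to\GL(W))$, so that $W$ is faithful.
\item The induction hypothesis gives $0\ne\ell_0\in W^\vee$ with $|H\cdot\ell_0|\le f(d/k)$.
\item Extend $\ell_0$ by zero on the other summands $gW$ to get $\ell\in U^\vee$.
\end{itemize}
Then $\mathrm{Stab}_G(\ell)\supseteq\mathrm{Stab}_H(\ell_0)$, so $|G\cdot\ell|\le k\,f(d/k)$. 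Since $\log k+C(\log(2d/k))^2\le C(\log 2d)^2$ for $C$ large, this case is harmless. By Clifford's theorem, if $U$ is not imprimitive, then every normal subgroup of $G$ acts $\bQ$-isotypically on $U$.

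\emph{Step 2 (from $\bQ$ to $\bC$).} Write $U\otimes\bC\cong m\bigoplus_{\sigma}V^\sigma$, where $V$ is absolutely irreducible, $m$ is the Schur index, and $\sigma$ runs over the Galois conjugates of $V$. In particular $\dim V\le d$.

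It suffices to find a nonzero $\lambda\in V^\vee$ whose \emph{projective} orbit $G\cdot[\lambda]$ has size at most $\exp(C'(\log 2d)^2)$. The argument is as follows.
\begin{itemize}
\item The stabilizer of $[\lambda]$ acts on the line $\bC\lambda$ by a character $\chi$ with values in $\mu_e$. The exponent $e$ is bounded polynomially in $d$, because the field of character values has degree at most $d$.
\item We may take $\lambda$ defined over a number field $F$ of degree at most $d$ over the character field (for instance $\lambda\in V_F^\vee$ with $V_F$ a realization of $V$ over a splitting field $F$). Then $\ell:=\mathrm{Tr}_{F/\bQ}(\lambda)$ lies in $U^\vee$, using $U\otimes_{\bQ}F\supseteq V_F$.
\item Choose $\lambda$ generically within its line and $\mathrm{Gal}$-translates so that $\ell\ne0$.
\item The orbit of $\ell$ is then controlled by $|G\cdot[\lambda]|\cdot e\cdot[F:\bQ]$, which is polynomial in $d$ times the projective orbit.
\end{itemize}
So we work projectively with a primitive complex group $\bar G\subset\mathrm{PGL}(V)$.

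\emph{Step 3 (primitive case via $F^*(G)$).} Primitivity forces every abelian normal subgroup to be central. Hence the generalized Fitting subgroup has the form $F^*(G)=Z\circ E_1\circ\cdots\circ E_r\circ S_1\circ\cdots\circ S_t$, where:
\begin{itemize}
\item the $E_i$ are extraspecial $p_i$-groups of order $p_i^{1+2a_i}$, acting on a factor of dimension $p_i^{a_i}$;
\item the $S_j$ are quasisimple components, each acting irreducibly on a factor of dimension $m_j\ge2$;
\item $V\cong\bigotimes_i V_i\otimes\bigotimes_j W_j\otimes V_0$ as a (projective) $G$-representation, with $\prod p_i^{a_i}\prod m_j\,\dim V_0=d$.
\end{itemize}
The number of tensor factors is at most $\log_2 d$.

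Choose $\lambda$ to be a pure tensor of well-chosen vectors in each factor. $G$ permutes isomorphic factors (at most $(\log_2 d)!$ permutations) and acts on each factor through $N(F^*)$. So it suffices to bound the projective orbit in each factor by $\exp(C(\log m)^2)$, with $m$ the factor dimension; the estimates then combine by superadditivity of $x\mapsto(\log x)^2$.

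The factors are handled as follows.
\begin{itemize}
\item \emph{Extraspecial factor.} The relevant group is contained in $p^{2a}\cdot\mathrm{Sp}_{2a}(p)$, of order $p^{O(a^2)}=\exp(O((\log m)^2))$, so any vector works.
\item \emph{$V_0$ factor.} $G$ acts on $V_0$ through $G/F^*(G)$ times scalars, a group embedding in $\mathrm{Out}$ of the components. Its order is quasipolynomially bounded by the same estimates, or it can be absorbed into the previous factors.
\item \emph{Quasisimple factors.} This is the main obstacle. Using CFSG:
  \begin{itemize}
  \item For groups of Lie type of rank $r$ over $\mathbb F_q$, the Landazuri--Seitz bounds give $m\ge q^{cr}$, while $|\mathrm{Aut}(S)|\le q^{O(r^2)}$. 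Hence $|\mathrm{Aut}(S)|\le\exp(C(\log m)^2)$, and any vector will do.
  \item Sporadic groups are bounded.
  \item For $A_k$ (and $2.A_k$), the order is far too large, so a vector must be chosen with large stabilizer. For a representation attached to a partition $\lambda\vdash k$ with longest row (or column) $k-j$, restriction to $S_{k-j}\times S_j$ contains a one-dimensional constituent. Take $\lambda$ to be such a (semi-)invariant vector. Its orbit is at most $2\binom{k}{j}$. Since $m\ge c\binom{k}{j}^{c'}$ for such partitions (hook-length estimates), this is polynomial in $m$.
  \item For $2.A_k$ spin representations, $m\ge2^{\lfloor(k-2)/2\rfloor}$, so $|S_k|\le\exp(C(\log m)^2)$ directly.
  \end{itemize}
\end{itemize}
I expect the delicate part to be this alternating/spin bookkeeping together with the interaction of $\mathrm{Out}$ and the tensor-permutation with the chosen pure tensor. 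Everything else is standard order bounds combined multiplicatively.
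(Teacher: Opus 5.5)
Your Step~2 is essentially the paper's descent. A line stabilizer acts by a character $\chi$ of order $e$ with $\phi(e)\le d$. So $\ker\chi$ has index at most $2d^2\,|G\cdot[\lambda]|$ and fixes a nonzero complex covector, hence a nonzero rational one, since fixed spaces commute with $\otimes_{\bQ}\bC$. The trace over a splitting field is not needed. There are, however, two genuine gaps.

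\textbf{First gap: primitivity over $\bQ$ versus $\bC$.} The structure theory in Step~3 (abelian normal subgroups are central, $F^*(G)=Z\circ E\circ S_1\circ\cdots$) requires $\bar G\subset\mathrm{PGL}(V)$ to be primitive over $\bC$. Step~1 only removes imprimitivity over $\bQ$, and $\bQ$-primitivity does not imply $\bC$-primitivity. Take $S_3$ on its $2$-dimensional reflection representation. It is $\bQ$-primitive: a transitively permuted pair of rational lines would have stabilizer $A_3$ acting by a rational, hence trivial, character, contradicting faithfulness. Yet over $\bC$ it is induced from a character of $A_3$, and $A_3$ is a non-central abelian normal subgroup. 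The group $\mathbb{F}_p\rtimes\mathbb{F}_p^\times$ on its augmentation representation gives such examples in every dimension $p-1$.

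The fix is to run the block induction over $\bC$ for projective orbits. If $\gamma[w]=[w]$ with $0\ne w\in W_1$, then $\gamma W_1=W_1$, so the orbit only gains the factor $r$. You then descend to $\bQ$ once at the end, which is how Lemma~\ref{lem3f} is organized. Your rational Step~1 becomes superfluous.

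\textbf{Second gap: the alternating case.} This is the heart of the argument, and it is false that $S^\lambda|_{S_{k-j}\times S_j}$ always has a one-dimensional constituent. Take $\lambda=(k-4,2,2)$ with $k\ge 8$. Pieri's rule excludes $S^{(k-4)}\boxtimes S^{(4)}$ and $S^{(k-4)}\boxtimes S^{(1^4)}$, since $\lambda/(k-4)$ is neither a horizontal nor a vertical strip. Any constituent involving $S^{(1^{k-4})}$ would need $\lambda$ to have at least $k-4$ rows. So no semi-invariant vector exists, and your ``polynomial in $m$'' claim is unsupported.

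What is true, and suffices for your quasi-polynomial target, is the following. By Young's rule, $S^\lambda$ has a nonzero vector fixed by the Young subgroup $S_\lambda$, whose index is at most $k^j$. Hook-length estimates give $j=O(\log 2m)$ and $\log k=O(\log 2m)$, so the orbit is $\exp(O((\log 2m)^2))$. For self-conjugate $\lambda$ you must also conjugate by an odd permutation to land in the correct $A_k$-constituent. This is exactly Lemma~\ref{lem3d}.

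With both repairs your outline should go through, but it is much heavier than the paper's. You handle by hand the extraspecial normalizers and the Landazuri--Seitz bounds for Lie type. You also handle $\mathrm{Out}(F^*(G))$, which contains the $\mathrm{Sp}_{2a}(p)$ pieces and not just outer automorphisms of components. Finally, you need compatible pure tensors under permutations of isomorphic factors. The paper absorbs all of this into Weisfeiler's theorem (Lemma~\ref{lem3e}). It then simply takes $w\in W^{H_1\times\cdots\times H_t}$ and bounds $[\Gamma:H_N]\le[\Gamma:ZN]\,|Z|\,[N:H_N]$.
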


The estimate for each fixed image group connects the above result to Theorem \ref{thm1a}. Let $H$ be a finite subgroup of $\GL_n(\bZ)$ and $V=\bQ^n$ be its natural rational representation. Write
$$
V \cong \trivrep^{\oplus a_0}\oplus\bigoplus_{i=1}^r W_i^{\oplus a_i},
$$
where $W_1,\ldots,W_r$ are pairwise nonisomorphic nontrivial irreducible rational $H$-representations. Set $K_i:=\ker(H\to\GL(W_i))$ and $H_i:=H/K_i$, so the induced action of $H_i$ on $W_i$ is faithful and irreducible. Write $W_i^\vee:=\Hom_{\bQ}(W_i,\bQ)$ for the dual representation. The action of $H_i$ on $W_i^\vee$ is given by
$$
(\ol{h}\cdot\ell)(w):=\ell(\ol{h}^{-1}\cdot w)
$$
for $\ol{h}\in H_i$, $\ell \in W_i^\vee$ and $w \in W_i$, where $\ol{h}^{-1}\cdot w$ is given by the action of $H_i$ on $W_i$.
Define
$$
\mu_i:=\min_{0\ne\ell\in W_i^\vee}|H_i\cdot\ell|,
$$
where $H_i\cdot\ell:=\{\ol{h}\cdot\ell :\ol{h}\in H_i\}$. Let
\begin{equation} \label{eq1c}
\Theta(H,V):=\max_{1\le i\le r}\frac{\mu_i}{2a_i}
\quad \text{and}
\quad s(H,V):=\#\left\{i:\frac{\mu_i}{2a_i}=\Theta(H,V)\right\}.
\end{equation}

\begin{theorem} \label{thm1c}
For every finite nontrivial subgroup $H$ of $\GL_n(\bZ)$,
\begin{equation*}
N_n^{\tor}(X;H)\ll_H X^{\Theta(H,V)}(\log X)^{s(H,V)-1}.
\end{equation*}
\end{theorem}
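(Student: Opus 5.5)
The plan is to reduce the count of tori with $G_T\cong H$ to counting, separately for each $i$, the $H_i$-extensions cut out by $W_i$, and then to combine these counts by a standard hyperbola-type summation. The count for each $i$ should give exponent $\mu_i/2$ in $\mathrm{cond}(W_i)$, with $\mu_i$ entering through the orbit of a well-chosen $\ell$. Concretely, a torus counted by $N_n^{\tor}(X;H)$ comes from a continuous surjection $\rho:G_{\bQ}\to H$, and two of these give isomorphic tori if they differ by conjugation in $N_{\GL_n(\bZ)}(H)$. So it suffices to count surjections $\rho$ up to this conjugation with $C(\rho)\le X$.

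\textbf{Factorization of the conductor.} The Artin conductor is multiplicative in direct sums and trivial on $\trivrep$. Hence
$$
C(T)=\prod_{i=1}^r \mathfrak f(W_i\circ\rho)^{a_i},
$$
where $\mathfrak f(W_i\circ\rho)$ is the conductor of the composite $G_{\bQ}\to H\to H_i\to\GL(W_i)$. Let $L_i$ be the fixed field of $\ker(G_{\bQ}\to H_i)$, a Galois $H_i$-field. Since $V$ is faithful, $\bigcap_i K_i=1$, so the splitting field $L$ is the compositum of the $L_i$. Given the tuple $(L_1,\dots,L_r)$, the number of possible $\rho$ up to conjugation is $O_H(1)$: a surjection onto $H$ is determined by finitely many choices of identifications $\Gal(L_i/\bQ)\cong H_i$. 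It therefore suffices to prove the following single-factor estimate for each $i$ and all $Y\ge 1$:
\begin{equation*}
\#\{L_i \text{ Galois with group } H_i:\ \mathfrak f(W_i)\le Y\}\ll_{H} Y^{\mu_i/2}. \tag{$\ast$}
\end{equation*}
Granting $(\ast)$, set $Y_i=\mathfrak f(W_i)$. The number of tuples $(Y_1,\dots,Y_r)$ with $\prod_i Y_i^{a_i}\le X$ is then bounded by partial summation over the weights $Y_i^{\mu_i/2-1}$. With $\theta_i=\mu_i/(2a_i)$, this multidimensional Dirichlet-type sum is $\ll X^{\max\theta_i}(\log X)^{\#\{i:\theta_i=\max\}-1}$, which is exactly $X^{\Theta(H,V)}(\log X)^{s(H,V)-1}$. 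This step is routine, by induction on $r$ using $\sum_{Y\le Z}Y^{\theta-1}\ll Z^{\theta}$ and $\sum_{Y\le Z}Y^{-1}\ll\log Z$.

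\textbf{Proof of $(\ast)$.} Fix $0\ne\ell\in W_i^\vee$ with $|H_i\cdot\ell|=\mu_i$, and let $S$ be its stabilizer, so $[H_i:S]=\mu_i$. Rational characters are real, so $W_i\cong W_i^\vee$. The map $\bQ[H_i/S]\to W_i^\vee$, $hS\mapsto h\ell$, is surjective by irreducibility. Thus $W_i$ is a quotient, hence a summand, of the permutation representation attached to the degree-$\mu_i$ field $F=L_i^S$, and $L_i$ is the Galois closure of $F$, so it suffices to count the fields $F$. I would do this by geometry of numbers on the $W_i$-isotypic part of $\cO_{L_i}$, choosing a lattice inside it of rank $\dim W_i$. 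The first step is to show that this lattice has covolume $\ll \mathfrak f(W_i)^{\dim W_i/2}$, by comparing with the conductor--discriminant formula for $\cO_{L_i}\otimes\bQ\cong\bigoplus_\chi$ (isotypic pieces). Minkowski then gives a short vector $x$ in it. The second step is to set $\alpha=\ell(x)\in F$, suitably interpreted through the identification $\cO_{L_i}^{W_i\text{-part}}\cong W_i\otimes(\text{multiplicity space})$. Its $\mu_i$ conjugates $h\ell(x)$ all have archimedean size $\ll \mathfrak f(W_i)^{1/2}$, and $\alpha$ generates $F$ after perturbing $x$ within a bounded box, as in Schmidt's argument. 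The third step is to count $\alpha$ through its minimal polynomial, or, better, through the vector $(h\ell(x))_{hS}\in\bR^{\mu_i}$, which lies in a lattice of bounded covolume in a box of side $\ll Y^{1/2}$. This gives $\ll Y^{\mu_i/2}$ possibilities.

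\textbf{Main obstacle.} I expect the last step of $(\ast)$ to be the hardest. A naive count via coefficients of the minimal polynomial of $\alpha$ gives $Y^{\mu_i(\mu_i+1)/4}$, not $Y^{\mu_i/2}$. The linear exponent must instead come from treating the conjugate vector $(h\ell(x))$ as a lattice point of a fixed (Galois-independent) shape in $\bR^{\mu_i}$, and then showing that distinct $F$ yield distinct points up to $O_H(1)$ ambiguity. The covolume bound in the first step, where the Artin conductor of $W_i$ alone must control the relevant piece of $\cO_{L_i}$ including wild primes (which contribute only $O_H(1)$), is the second delicate point. There I would first try to prove the local inequality prime by prime using the conductor--discriminant formula for $L_i/\bQ$.
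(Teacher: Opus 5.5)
Your outer structure matches the paper: Lemma~\ref{lem2a}, the factorization $C_V(\varphi)=\prod_i C_{W_i}(\varphi_i)^{a_i}$, recovering $\varphi$ from $(\varphi_1,\ldots,\varphi_r)$, and the dyadic/hyperbola summation. However, your proof of $(\ast)$ has a genuine gap, in both the size bound and the counting step.

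\textbf{The counting step fails.} The vector $(h\ell(x))_{hS}$ is the conjugate vector of an algebraic integer $\alpha\in\cO_F$, and such vectors lie in no fixed lattice. For a given $F$ it lies in the Minkowski image of $\cO_F$, whose covolume ($\asymp D_F^{1/2}$) and shape both depend on $F$. Across all $F$, the conjugate vectors in a box of side $B$ correspond to monic integer polynomials of degree $\mu_i$ with roots in that box. There are about $B^{\mu_i(\mu_i+1)/2}$ of these, not $B^{\mu_i}$. The only $F$-independent integrality is that integral $H_i$-invariant polynomials in $x$ take values in $\bZ$. With your bound $\|x\|\ll Y^{1/2}$, this yields $Y^{\mu_i(\mu_i+1)/4}$ via all coefficients of $\prod_j(T-\ell_j(x))$, as you observed. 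Even with $d=\dim W_i$ invariants of degree $\le\mu_i$ it gives only $Y^{d\mu_i/2}$. So the box count giving $Y^{\mu_i/2}$ has no basis.

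\textbf{Two ideas are missing.}

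First, the size bound must be $\|x\|\ll Y^{1/(2d)}$, not $Y^{1/2}$. Take the rank-$d$ tuning module $\Xi_\rho(L_i)=\{x\in\cO_{L_i}\otimes M: g(x)=\rho(g^{-1})x\ \forall g\}$ for an $H_i$-stable lattice $M\subset W_i$. Its covolume is $\ll C_{W_i}(L_i)^{1/2}$, not $C^{d/2}$, for three reasons:
\begin{enumerate}
\item $\delta_\rho(L_i)^2\in\bZ$;
\item at $p\nmid|H_i|$, Wood--Yasuda's tame comparison for balanced representations gives $v_p(\delta_\rho(L_i)^2)=a_{p,W_i}(L_i)$;
\item at $p\mid|H_i|$ the valuation is bounded, since there are only finitely many $\psi\colon G_{\bQ_p}\to H_i$.
\end{enumerate}
Minkowski then gives the short vector of size $Y^{1/(2d)}$.

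Second, count through exactly $d$ invariants. The map $x\mapsto(e_k(\ell_1(x),\ldots,\ell_{\mu_i}(x)))_k$ is finite and $H_i$-invariant. Noether normalization yields a rational linear projection to $\bA^d$ that remains finite on $W_i$. This gives integral invariants $f_1,\ldots,f_d$ of degree $\le\mu_i$ with $f_k(x)\in\bZ$ and $|f_k(x)|\ll Y^{\mu_i/(2d)}$. Hence there are $\ll Y^{\mu_i/2}$ value tuples, each with $O(1)$ preimages $x$ by finiteness.

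Finally, $L_i$ is the normal closure of the field generated by the coordinates of $x$ itself. The core of their stabilizer fixes $x\ne0$ in $L_i\otimes W_i$, so it is trivial by irreducibility and faithfulness. This makes your Schmidt-type perturbation, used to ensure $\ell(x)$ generates $F$, unnecessary.
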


The proof of Theorem \ref{thm1c} is similar to Dummit's global tuning-lattice approach to counting Galois extensions via invariant polynomials \cite[Chapter 3]{Dum14}. 
We supplement this approach by comparing the tuning discriminant with the Artin conductor, recovering the splitting field from the normal closure of a nonzero tuning vector, and constructing low-degree invariant maps for the irreducible factors. Theorem \ref{thm1b} is proved separately using structural results on finite linear groups, and Theorem \ref{thm1a} then follows by combining Theorems \ref{thm1b} and \ref{thm1c}.

\section{Preliminaries} \label{Sec2}

We use the following notation throughout the paper. For a finite-image
rational representation $U$ of $G_{\bQ}$, let $a_p(U)$ denote the exponent of $p$ in its Artin conductor. Given a finite group $G$, a finite-dimensional rational $G$-representation $W$ and a continuous homomorphism $\varphi:G_{\bQ}\to G$, denote by $\varphi^*W$ the rational $G_{\bQ}$-representation obtained by pullback along $\varphi$, and let
$$
a_{p,W}(\varphi):=a_p(\varphi^*W)
\quad\text{and}\quad
C_W(\varphi):=\prod_p p^{a_{p,W}(\varphi)}.
$$
For $X>0$, write
$$
N(X;G,W):=\#\left\{\varphi:G_{\bQ}\twoheadrightarrow G:C_W(\varphi)\le X\right\}.
$$

Now let $H$ be a finite group, $M$ be a lattice of rank $d$ and $\rho:H\hookrightarrow\GL(M)$ be a faithful representation. Then $U:=M\otimes_{\bZ}\bQ$ can be viewed as a rational $H$-representation.
By an \emph{$H$-Galois extension}, we mean a finite Galois extension
$L/\bQ$ together with a fixed isomorphism $\Gal(L/\bQ)\cong H$. Let
$$
\varphi_L: G_{\bQ}\twoheadrightarrow\Gal(L/\bQ)\xrightarrow{\sim}H
$$
be the associated quotient map. We write
$$
a_{p,U}(L):=a_{p,U}(\varphi_L) \quad\text{and}\quad
C_U(L):=C_U(\varphi_L)=\prod_p p^{a_{p,U}(L)}.
$$
Conversely, for a continuous surjection $\varphi:G_{\bQ}\twoheadrightarrow H$, let $L_\varphi$ be the fixed field of $\ker(\varphi)$ equipped with an isomorphism $\Gal(L_{\varphi}/\bQ)\cong H$ induced by $\varphi$. Then $a_{p,U}(\varphi)=a_{p,U}(L_\varphi)$ and $C_U(\varphi)=C_U(L_\varphi)$.

\begin{lemma} \label{lem2a}
Let $H$ be a finite subgroup of $\GL_n(\bZ)$ and $V=\bQ^n$ be its natural rational representation. Then $N_n^{\tor}(X;H)\le N(X;H,V)$.
\end{lemma}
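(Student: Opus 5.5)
We construct an injection from the set of isomorphism classes of tori counted by $N_n^{\tor}(X;H)$ into the set of surjections counted by $N(X;H,V)$. By the anti-equivalence $T\mapsto X^*(T)$, isomorphism classes of $n$-dimensional tori over $\bQ$ correspond to isomorphism classes of $G_{\bQ}$-lattices of rank $n$. After choosing a basis, these are the continuous homomorphisms $\rho:G_{\bQ}\to\GL_n(\bZ)$ with finite image, taken up to conjugation by $\GL_n(\bZ)$.

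First, for each torus $T$ with $G_T$ conjugate to $H$, we choose a basis of $X^*(T)$ such that $\im(\rho_T)=H$ exactly. Replacing $\rho_T$ by $g\rho_Tg^{-1}$ for a suitable $g\in\GL_n(\bZ)$ achieves this, and it does not change the isomorphism class. This gives a continuous surjection $\varphi_T:G_{\bQ}\twoheadrightarrow H$. Next, we check that the conductor is preserved. The pullback $\varphi_T^*V$ is the rational representation $X^*(T)_{\bQ}$ with the basis chosen above, so $C_V(\varphi_T)=C(T)\le X$. Thus $\varphi_T$ is counted in $N(X;H,V)$.

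Finally, we check injectivity. Suppose two tori $T,T'$ give the same surjection, $\varphi_T=\varphi_{T'}$ as maps $G_{\bQ}\to H\subset\GL_n(\bZ)$. Then the $G_{\bQ}$-lattices $X^*(T)$ and $X^*(T')$ are both identified with $\bZ^n$ carrying the same $G_{\bQ}$-action. They are therefore isomorphic, and by the anti-equivalence $T\cong T'$. Hence the map $T\mapsto\varphi_T$, which depends on our choice of $\varphi_T$ for each class, is injective, and $N_n^{\tor}(X;H)\le N(X;H,V)$.

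The only point needing care is that $\varphi_T$ is well-defined only up to the choices made. We handle this by fixing one representative $\varphi_T$ per isomorphism class and noting that injectivity holds regardless of these choices. The count can lose multiplicity here: conjugation by the normalizer of $H$ in $\GL_n(\bZ)$ produces other surjections from the same torus. This only helps, since we need an inequality, not an equality. No genuine obstacle is expected; the argument is essentially bookkeeping.
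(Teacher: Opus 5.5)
Your proof is correct and follows the same route as the paper: you choose a basis of $X^*(T)$ so that the image is exactly $H$, note that the resulting surjection $G_{\bQ}\twoheadrightarrow H$ has the same Artin conductor, and get injectivity from the anti-equivalence between tori and $G_{\bQ}$-lattices. Your remarks on conjugating the image into $H$ and on the possible extra surjections coming from the normalizer of $H$ are accurate; they only make explicit what the paper leaves implicit.
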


\begin{proof}
For every torus $T$ counted by $N_n^{\tor}(X;H)$, choose a basis of $X^*(T)$ for which the image of the associated Galois representation is exactly $H$. This gives a surjection $G_{\bQ}\twoheadrightarrow H$ with the same Artin conductor. If two tori give the same surjection after these choices, then their character lattices are isomorphic as $G_{\bQ}$-lattices, so the tori are isomorphic. 
\end{proof}

\subsection{Tuning submodules and small vectors} \label{Sub22}

Fix an $H$-Galois extension $L/\bQ$. Then $H$ acts on $\cO_L$ by the Galois action and acts on $M$ by $\rho$. For
$$
x=\sum_j a_j\otimes m_j\in\cO_L\otimes_{\bZ}M,
$$
we write
$$
g(x):=(g\otimes 1)x=\sum_j g(a_j)\otimes m_j
$$
and
$$
\rho(g)x:=(1\otimes\rho(g))x=\sum_j a_j\otimes\rho(g)m_j.
$$
We use the same notation on $L\otimes_{\bQ}U$. These two actions commute, so
$$
g \cdot_\rho x:=(1\otimes\rho(g))(g\otimes 1)x=\rho(g)g(x)
$$
defines a semilinear left action of $H$ on $\cO_L\otimes_{\bZ}M$. Define the \emph{tuning submodule} to be the fixed-point submodule
\begin{equation} \label{eq2a}
\Xi_\rho(L) 
:=(\cO_L\otimes_{\bZ}M)^{H, \cdot_\rho}
=\{x\in\cO_L\otimes_{\bZ}M:g(x)=\rho(g^{-1})x\text{ for every }g\in H\}.
\end{equation}
A similar construction was given by Dummit \cite[Definition 3.1]{Dum14}. The next lemma is also an analogue of \cite[Lemma 3.2]{Dum14}.

\begin{lemma} \label{lem2b}
The module $\Xi_\rho(L)$ is a free $\bZ$-module of rank $d$. Moreover, if $\xi_1,\ldots,\xi_d$ is a $\bZ$-basis of $\Xi_\rho(L)$, then it is an $L$-basis of $L\otimes_{\bQ}U$.
\end{lemma}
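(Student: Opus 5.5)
The plan is to use Galois descent (Hilbert's Theorem 90 / Speiser's lemma) for the semilinear action $\cdot_\rho$ on $L\otimes_{\bQ}U$, and then intersect with the lattice $\cO_L\otimes_{\bZ}M$.

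\textbf{Step 1: descent.} Consider the $L$-vector space $\mathcal{V}:=L\otimes_{\bQ}U$, of $L$-dimension $d$. The map $x\mapsto g\cdot_\rho x=\rho(g)g(x)$ is $g$-semilinear:
$$g\cdot_\rho(\lambda x)=g(\lambda)\,(g\cdot_\rho x) \quad\text{for } \lambda\in L.$$
It is also a group action, since the two actions commute and both are left actions. Because $H\cong\Gal(L/\bQ)$ acts on $L$ through this fixed isomorphism, Speiser's lemma (Galois descent for vector spaces) applies. It says the natural map
$$L\otimes_{\bQ}\mathcal{V}^{H,\cdot_\rho}\to\mathcal{V}$$
is an isomorphism. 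Consequently $\mathcal{V}^{H,\cdot_\rho}$ is a $\bQ$-vector space of dimension $d$, and any $\bQ$-basis of it is an $L$-basis of $\mathcal{V}$.

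If one prefers to avoid quoting the lemma, the proof is short. Injectivity follows from Dedekind independence of characters via the standard minimal-relation argument. Surjectivity uses the trace map $x\mapsto\sum_g g\cdot_\rho(\lambda x)$ together with a normal basis, or equivalently the vanishing of $H^1(H,\GL_d(L))$.

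\textbf{Step 2: pass to the lattice.} By definition,
$$\Xi_\rho(L)=\mathcal{V}^{H,\cdot_\rho}\cap(\cO_L\otimes_{\bZ}M).$$
Since $\cO_L\otimes M$ is a finitely generated free $\bZ$-module, $\Xi_\rho(L)$ is a subgroup of it, hence free of finite rank. To see that the rank is exactly $d$:
\begin{itemize}
\item $\Xi_\rho(L)\otimes\bQ$ injects into $\mathcal{V}^{H,\cdot_\rho}$, so the rank is at most $d$.
\item Conversely, for any $v\in\mathcal{V}^{H,\cdot_\rho}$ there is a positive integer $N$ with $Nv\in\cO_L\otimes M$, because $\cO_L\otimes M$ spans $\mathcal{V}$ over $\bQ$. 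The element $Nv$ is still fixed, since the action is $\bZ$-linear. Hence $\Xi_\rho(L)\otimes\bQ=\mathcal{V}^{H,\cdot_\rho}$, and the rank equals $d$.
\end{itemize}

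\textbf{Step 3: the basis statement.} A $\bZ$-basis $\xi_1,\ldots,\xi_d$ of $\Xi_\rho(L)$ is a $\bQ$-basis of $\Xi_\rho(L)\otimes\bQ=\mathcal{V}^{H,\cdot_\rho}$. By Step 1 it is therefore an $L$-basis of $L\otimes_{\bQ}U$.

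The only real content is Step 1. The main thing to check there is that the action is a genuine semilinear action compatible with the identification $H\cong\Gal(L/\bQ)$: the fixed-point condition $g(x)=\rho(g^{-1})x$ is exactly $g\cdot_\rho x=x$. One also needs that $H$ acts faithfully on $L$ so that descent applies, which holds because $L/\bQ$ is $H$-Galois.
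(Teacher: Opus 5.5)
Your proposal is correct and follows essentially the same route as the paper. Galois descent for the semilinear action $\cdot_\rho$ on $L\otimes_{\bQ}U$ gives a $d$-dimensional fixed space whose $\bQ$-bases are $L$-bases, and $\Xi_\rho(L)$ is a free submodule of $\cO_L\otimes_{\bZ}M$ with $\Xi_\rho(L)\otimes_{\bZ}\bQ$ equal to that fixed space. The only cosmetic difference is that you obtain this last identification by clearing denominators, whereas the paper tensors the kernel description of $\Xi_\rho(L)$ with the flat $\bZ$-module $\bQ$.
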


\begin{proof}
Let $E=L\otimes_{\bQ}U$ and $A=\cO_L\otimes_{\bZ}M$. By Galois descent for vector spaces \cite[Lemma 1.3.10]{Poo17}, $L\otimes_{\bQ}E^{H,\cdot_\rho}\cong E$, so $E^{H,\cdot_\rho}$ has dimension $d$ over $\bQ$. By \eqref{eq2a}, $\Xi_\rho(L)$ is the kernel of the $\bZ$-linear map
\begin{equation} \label{eq2b}
f : A \to \bigoplus_{g\in H}A
\quad (x \mapsto (g(x)-\rho(g^{-1})x)_{g\in H}).
\end{equation}
$A$ is a free $\bZ$-module, so its submodule $\Xi_\rho(L)$ is also free. Since $\bQ$ is flat over $\bZ$, tensoring the map $f$ with $\bQ$ gives $\Xi_\rho(L)\otimes_{\bZ}\bQ \cong E^{H,\cdot_\rho}$. Thus $\Xi_\rho(L)$ has rank $d$. 
If $\xi_1,\ldots,\xi_d$ is a $\bZ$-basis of $\Xi_\rho(L)$, then it is a $\bQ$-basis of $E^{H,\cdot_\rho}$ so $1\otimes\xi_1,\ldots,1\otimes\xi_d$ is an $L$-basis of $L\otimes_{\bQ}E^{H,\cdot_\rho}$. Under the descent isomorphism above, these vectors map to $\xi_1,\ldots,\xi_d$, respectively. We conclude that $\xi_1,\ldots,\xi_d$ is an $L$-basis of $E$.
\end{proof}

Choose a basis of $M$ and identify $\cO_L\otimes_{\bZ} M$ with $\cO_L^d$. Let $\xi_1,\ldots,\xi_d$ be a $\bZ$-basis of $\Xi_\rho(L)$, and let $B_L$ be the $d \times d$ matrix over $\cO_L$ whose $i$-th column is $\xi_i$. Then $\delta_\rho(L):=\det(B_L)$ is nonzero by Lemma \ref{lem2b}.

\begin{lemma} \label{lem2c}
The quantity $\Delta_\rho(L):=|\delta_\rho(L)^2|$ is a well-defined positive integer which is independent of the choices of the bases of $M$ and $\Xi_\rho(L)$. 
\end{lemma}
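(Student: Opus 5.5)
The plan is to show three things about $\delta_\rho(L)^2$: it is a rational integer up to sign, it is nonzero, and changing bases only multiplies $\delta_\rho(L)$ by a unit of $\bZ$. Nonvanishing is already supplied by Lemma \ref{lem2b}, since $\xi_1,\ldots,\xi_d$ is an $L$-basis of $L\otimes_{\bQ}U$ and hence $\det(B_L)\ne 0$. The entries of $B_L$ lie in $\cO_L$, so $\delta_\rho(L)\in\cO_L$ and $\delta_\rho(L)^2\in\cO_L$.

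\emph{Independence of the bases.} A different $\bZ$-basis of $\Xi_\rho(L)$ replaces $B_L$ by $B_LP$ with $P\in\GL_d(\bZ)$. A different basis of $M$ replaces every coordinate vector by $Qx$ with $Q\in\GL_d(\bZ)$, so $B_L$ becomes $QB_L$. In both cases the determinant is multiplied by $\pm1$, so $\delta_\rho(L)^2$ does not change, and neither does $|\delta_\rho(L)^2|$.

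\emph{Rationality.} Let $R$ be the matrix of $\rho(g)$ in the chosen basis of $M$, so $R\in\GL_d(\bZ)$ and $\det R=\pm1$. For $g\in H$, applying $g$ entrywise to $B_L$ gives
\[
g(B_L)=\rho(g^{-1})B_L,
\]
using the defining relation $g(\xi_i)=\rho(g^{-1})\xi_i$ from \eqref{eq2a}, applied column by column. Taking determinants gives $g(\delta_\rho(L))=\det\rho(g)^{-1}\,\delta_\rho(L)=\pm\delta_\rho(L)$. Hence $g(\delta_\rho(L)^2)=\delta_\rho(L)^2$ for all $g\in H\cong\Gal(L/\bQ)$. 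So $\delta_\rho(L)^2\in\cO_L\cap\bQ=\bZ$, and it is nonzero. Its absolute value is therefore a positive integer.

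The main point needing care is checking that the semilinear fixed-point condition really gives $g(B_L)=\rho(g^{-1})B_L$ entrywise under the identification $\cO_L\otimes M\cong\cO_L^d$. This holds because $g$ acts on the $\cO_L$ factor coordinatewise, while $\rho(g^{-1})$ acts by left multiplication by an integral matrix. Everything else is routine.
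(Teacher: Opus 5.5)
Your proposal is correct and follows the paper's proof essentially step for step. The relation $g(B_L)=\rho(g^{-1})B_L$ gives $g(\delta_\rho(L))=\pm\delta_\rho(L)$, hence $\delta_\rho(L)^2\in\cO_L\cap\bQ=\bZ$, which is nonzero by Lemma \ref{lem2b}; and changing either basis multiplies $B_L$ on the left or right by an element of $\GL_d(\bZ)$, which changes $\delta_\rho(L)$ only by a sign.
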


\begin{proof}
For $g\in H$, $g(B_L)=\rho(g^{-1})B_L$ gives $g(\delta_\rho(L))=\det(\rho(g^{-1}))\delta_\rho(L)$.
Since $\rho(g^{-1})\in\GL_d(\bZ)$ satisfies $\det(\rho(g^{-1}))\in\{\pm1\}$, we have $g(\delta_\rho(L)^2)=\det(\rho(g^{-1}))^2\delta_\rho(L)^2=\delta_\rho(L)^2$. 
Hence $\delta_\rho(L)^2 \in \cO_L$ is fixed by every $g \in H$, so $\delta_\rho(L)^2 \in \bZ$. Changing the basis of $M$ (resp. $\Xi_\rho(L)$) multiplies $B_L$ on the left (resp. right) by a matrix in $\GL_d(\bZ)$. Since every element of $\GL_d(\bZ)$ has determinant $\pm 1$, $\Delta_\rho(L)$ is independent of the choices of the bases of $M$ and $\Xi_\rho(L)$. 
\end{proof}

For a prime $p$, write $L_p:=L\otimes_{\bQ}\bQ_p$, $\cO_{L,p}:=\cO_L\otimes_{\bZ}\bZ_p$, $M_p:=M\otimes_{\bZ}\bZ_p$ and $U_p:=M_p\otimes_{\bZ_p}\bQ_p \cong U\otimes_{\bQ}\bQ_p$. Let $\rho_p: H \to \GL(M_p)$ and $\rho_{p, \bQ_p}: H \to \GL(U_p)$ be the representations induced by $\rho$. 
Fix the isomorphism $\Gal(L/\bQ)\cong H$ and choose an embedding $\ol{\bQ}\hookrightarrow\ol{\bQ}_p$ extending $\bQ\hookrightarrow\bQ_p$, which identifies $G_{\bQ_p}$ as a subgroup of $G_{\bQ}$. Define a homomorphism
$$
\psi_p:G_{\bQ_p}\hookrightarrow G_{\bQ}\twoheadrightarrow\Gal(L/\bQ)\cong H.
$$
Denote the pointed $H$-torsor corresponding to $\psi_p$ by $T_{\psi_p}$. It is isomorphic to the base change of $\Spec L\to\Spec\bQ$ to $\Spec\bQ_p$, and hence $T_{\psi_p}\cong\Spec L_p\rightarrow\Spec\bQ_p$.

Since $\bZ_p$ is flat over $\bZ$, tensoring \eqref{eq2b} with $\bZ_p$ gives
\begin{equation} \label{eq2c}
\Xi_\rho(L)\otimes_{\bZ}\bZ_p =\{x\in\cO_{L,p}\otimes_{\bZ_p}M_p: g(x)=\rho_p(g^{-1})x\text{ for every }g\in H\}.
\end{equation}
Let $\delta_{\psi_p}$ denote the action $\delta$ of $H$ on $L_p$ appearing in Wood--Yasuda \cite[p. 12597]{WY15}. Under the pointed torsor convention used there, this action is opposite to our standard left Galois action; it is given by
$$
\delta_{\psi_p}(g)(a\otimes m)=g^{-1}(a)\otimes m,
$$
whereas the action induced by $\rho_p$ is
$$
\widetilde\rho_p(g)(a\otimes m)=a\otimes\rho_p(g)m.
$$
Consequently, the Wood--Yasuda tuning condition $\delta_{\psi_p}(g)x=\widetilde\rho_p(g)x$ is equivalent to $g(x)=\rho_p(g^{-1})x$. Hence the right-hand side of \eqref{eq2c} is the local tuning submodule of Wood--Yasuda \cite[Definition 3.1]{WY15}.

Write $\delta_L:=\delta_\rho(L)$ for simplicity. Recall that $\xi_1, \ldots, \xi_d$ is a $\bZ$-basis of $\Xi_\rho(L)$ and the corresponding matrix $B_L$ (with respect to the basis of $M$) has determinant $\delta_L$. Then $\xi_1 \otimes 1, \ldots, \xi_d \otimes 1$ is a $\bZ_p$-basis of $\Xi_\rho(L)\otimes_{\bZ}\bZ_p$ and the corresponding matrix has the same determinant $\delta_L$, viewed as an element of $\cO_{L,p}$. Define
$$
v_{\rho,p}(L):=v_{\bQ_p,H,\rho_p}(\psi_p) =\frac{1}{|H|}\length_{\bZ_p} \left(\cO_{L,p}/\delta_L\cO_{L,p}\right),
$$
where $v_{\bQ_p,H,\rho_p}$ denotes the $v$-invariant of Wood--Yasuda \cite[Definition 3.3]{WY15}. Since $\cO_{L,p}$ is a free $\bZ_p$-module of rank $|H|$, we have 
\begin{equation} \label{eq2d}
v_p(\Delta_\rho(L))=v_p(\delta_L^2)=\frac{1}{|H|}\length_{\bZ_p} \left(\cO_{L,p}/\delta_L^2\cO_{L,p}\right)=2v_{\rho,p}(L).
\end{equation}

\begin{proposition} \label{prop2e}
There is a constant $c_{H,\rho}>0$ such that every $H$-Galois extension $L/\bQ$ satisfies
\begin{equation*}
\Delta_\rho(L)\le c_{H,\rho} C_U(L).
\end{equation*}
\end{proposition}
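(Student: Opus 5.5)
Both sides factor over primes, so I will compare them one prime at a time. By \eqref{eq2d}, $v_p(\Delta_\rho(L)) = 2v_{\rho,p}(L)$, where $v_{\rho,p}(L)$ is the Wood--Yasuda $v$-invariant of the local homomorphism $\psi_p : G_{\bQ_p}\to H$ with respect to the integral representation $\rho_p$. It therefore suffices to prove two local statements. First, for every prime $p\nmid |H|$, one has $2v_{\rho,p}(L)\le a_{p,U}(L)$, and in fact equality should hold. Second, for each of the finitely many primes $p\mid |H|$, the difference $2v_{\rho,p}(L)-a_{p,U}(L)$ is bounded above by a constant $c_p$ depending only on $H$, $\rho$ and $p$. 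Taking $c_{H,\rho}=\prod_{p\mid |H|}p^{\max(c_p,0)}$ then gives the proposition.

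For the tame primes I will use the comparison results of Wood--Yasuda. When $\psi_p$ is tamely ramified, their $v$-function equals the ``weight'' $w$ of the inertia generator: if $\psi_p(I_p)=\langle g\rangle$ with $g$ of order $e$, and $\rho(g)$ has eigenvalues $\zeta_e^{a_1},\dots,\zeta_e^{a_d}$ with $0\le a_j<e$, then $v=\sum_j a_j/e$. On the other hand, the tame Artin conductor is $a_p(U)=d-\dim U^{I_p}=\#\{j: a_j\neq 0\}$.

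Here I must be careful about the factor $2$, which is the subtle point. The eigenvalues of $\rho(g)$ come in conjugate pairs $a$ and $e-a$, because $\rho$ is rational. So $\sum_j a_j/e$ equals half the number of nonzero exponents. Hence $2v = a_p(U)$ exactly, which gives the tame statement with equality. I expect to justify the formula for $v$ either by quoting \cite{WY15} or by computing directly: $\cO_{L,p}$ is a product of copies of the ring of integers of a tame extension, and in it the tuning module decomposes into eigenlines generated by $\pi^{a_j}$, where $\pi$ is a uniformizer of the totally ramified part.

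For the wild primes, the main obstacle is to bound $v_{\rho,p}$ uniformly over all local $H$-extensions. Wild extensions of $\bQ_p$ with Galois group a subquotient of $H$ form a finite set, by Krasner's lemma together with the finiteness of extensions of bounded degree of $\bQ_p$. So $\psi_p$ ranges over finitely many homomorphisms $G_{\bQ_p}\to H$, up to the choice of embedding, which changes $\psi_p$ only by conjugation. Conjugation does not change $\length(\cO_{L,p}/\delta_L\cO_{L,p})$, since it amounts to a change of basis of $M_p$.

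Consequently $v_{\rho,p}(L)$ takes only finitely many values as $L$ varies, and hence is bounded by a constant depending only on $p$, $H$ and $\rho$. Since $a_{p,U}(L)\ge 0$, this gives the bound $c_p$. The finiteness argument needs only that $v_{\rho,p}(L)$ depends on $L$ only through $\psi_p$, which holds because $\cO_{L,p}$ and $\delta_L$ are determined by the local torsor via \eqref{eq2c}.

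In summary, $\Delta_\rho(L)=\prod_p p^{2v_{\rho,p}(L)}\le \prod_{p\mid|H|}p^{c_p}\prod_{p}p^{a_{p,U}(L)}$, which is the claimed bound.
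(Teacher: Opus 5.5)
Your proposal is correct and follows the paper's proof. At primes $p\nmid|H|$ you combine the Wood--Yasuda tame comparison with the eigenvalue pairing coming from rationality of $\rho$ (the ``balanced'' condition) to get $v_p(\Delta_\rho(L))=a_{p,U}(L)$. At primes $p\mid|H|$ you bound $2v$ uniformly using the finiteness of $\Hom_{\mr{cont}}(G_{\bQ_p},H)$, which is exactly the paper's constant $b_{p,H,\rho}$; taking the maximum over all $\psi$ also makes your discussion of the choice of embedding and conjugation unnecessary.
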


\begin{proof}
Suppose first that $p\nmid |H|$. The image under $\psi_p$ of the wild inertia subgroup of $G_{\bQ_p}$ is a $p$-subgroup of $H$, so it is trivial. Thus $\psi_p$ is tamely ramified, and the local data $(\bQ_p,H,\rho_p,\psi_p)$ satisfy the hypotheses of the tame setting in \cite[Section 4.1]{WY15}. Since $\rho$ is defined over $\bQ\subset\bR$, the eigenvalues of every element of $H$ occur in inverse pairs away from $\pm1$, so $\rho_p$ is \emph{balanced} in the sense of Wood--Yasuda \cite[Definition 4.4]{WY15}. Their tame comparison \cite[Proposition 4.5]{WY15} gives
$$
v_{\bQ_p,H,\rho_p}(\psi_p)=\frac{1}{2}a_{\bQ_p,H,\rho_p}(\psi_p),
$$
where $a_{\bQ_p,H,\rho_p}(\psi_p)$ denotes the Artin conductor of the local representation
$$
G_{\bQ_p}\xrightarrow{\psi_p}H\xrightarrow{\rho_{p, \bQ_p}}\GL(U_p).
$$
Since $\psi_p=\varphi_L|_{G_{\bQ_p}}$, this representation is
naturally isomorphic to $(\varphi_L^*U)|_{G_{\bQ_p}}\otimes_{\bQ}\bQ_p$, so
$$
a_{\bQ_p,H,\rho_p}(\psi_p)=a_{p,U}(L).
$$
Thus \eqref{eq2d} implies that $v_p(\Delta_\rho(L))=a_{p,U}(L)$.

Now suppose that $p\mid |H|$. There are only finitely many continuous homomorphisms from $G_{\bQ_p}$ to $H$, since $\bQ_p$ has only finitely many extensions of bounded degree. Hence
$$
b_{p,H,\rho} :=\max_{\psi \in \Hom_{\mr{cont}}(G_{\bQ_p}, H)} 2v_{\bQ_p,H,\rho_p}(\psi)
$$
is finite. For the homomorphism $\psi_p$ arising from $L$, \eqref{eq2d} gives 
$$
v_p(\Delta_\rho(L)) = 2v_{\bQ_p,H,\rho_p}(\psi_p) \le b_{p,H,\rho}.
$$
We conclude that
\begin{equation*}
\Delta_\rho(L) =\prod_{p\nmid |H|}p^{a_{p,U}(L)} \prod_{p\mid |H|}p^{v_p(\Delta_\rho(L))}
\le \left(\prod_{p\mid |H|}p^{b_{p,H,\rho}}\right) C_U(L). \qedhere
\end{equation*}
\end{proof}

Fix an $H$-invariant positive definite inner product on $U_{\bR}:=U\otimes_{\bQ}\bR$. Extend it to the associated Hermitian inner product on $U\otimes_{\bQ}\bC$, and denote the corresponding norm by $\|\cdot\|$. Choose an embedding $\iota:L\hookrightarrow\bC$. For the complex conjugation $\tau$, $\tau \circ \iota : L \to \bC$ is also an embedding. Since $L/\bQ$ is Galois, there is a unique element $c = c_{\iota} \in H$ such that $\iota(ca)=(\tau \circ \iota)(a)=\ol{\iota(a)}$ for every $a \in L$. The relation $\iota(c^2a)=\ol{\iota(ca)}=\iota(a)$ gives $c^2=1$.
For an element $x=\sum_j a_j\otimes m_j\in\Xi_\rho(L)$, we have
$$
\ol{\iota(x)} =\sum_j\ol{\iota(a_j)}\otimes m_j =\sum_j\iota(ca_j)\otimes m_j =\iota(c(x)).
$$
By the tuning relation, $\iota(c(x))=\rho(c^{-1})\iota(x)=\rho(c)\iota(x)$, so $\iota(\Xi_\rho(L))$ lies in the real vector space
\begin{equation*}
W_c:=\{z\in U\otimes_{\bQ}\bC:\ol z=\rho(c)z\}.
\end{equation*}

If $U_{\bR}=U_+\oplus U_-$ is the decomposition into the $\pm1$ eigenspaces of $\rho(c)$, then $W_c=U_+\oplus iU_-$ and $W_c$ has real dimension $d$. 
We equip $W_c$ with the restriction of the Hermitian norm $\|\cdot\|$ on $U\otimes_{\bQ}\bC$; explicitly,
$$
\|u+iv\|^2=\|u\|^2+\|v\|^2
$$
for $u\in U_+$ and $v\in U_-$.

\begin{lemma} \label{lem2g}
There is a constant $b_{H,\rho}>0$ such that
\begin{equation*}
\covol(\iota(\Xi_\rho(L)))\le b_{H,\rho}\Delta_\rho(L)^{1/2}.
\end{equation*}
\end{lemma}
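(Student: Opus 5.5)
Since $\xi_1,\ldots,\xi_d$ is a $\bZ$-basis of $\Xi_\rho(L)$, the covolume of $\iota(\Xi_\rho(L))$ in the $d$-dimensional real space $W_c$ is $|\det|$ of the real $d\times d$ matrix expressing $\iota(\xi_1),\ldots,\iota(\xi_d)$ in an orthonormal basis of $W_c$. The plan is to compare this with $|\iota(\delta_L)|=|\det\iota(B_L)|$, where $\iota(B_L)$ is the complex matrix of the same vectors in the fixed $\bZ$-basis $e_1,\ldots,e_d$ of $M$. Since $|\iota(\delta_L)|^2=|\iota(\delta_L^2)|=\Delta_\rho(L)$ because $\delta_L^2\in\bZ$ (Lemma \ref{lem2c}), we get $|\det\iota(B_L)|=\Delta_\rho(L)^{1/2}$.

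Concretely, fix a real orthonormal basis $f_1,\ldots,f_d$ of $U_{\bR}$ adapted to $U_+\oplus U_-$; this is possible since $\rho(c)$ is an isometry of order dividing 2, so $U_+\perp U_-$. Then $f_j$ for $f_j\in U_+$ and $if_j$ for $f_j\in U_-$ form an orthonormal basis of $W_c$ for the stated norm, and they also form a $\bC$-basis of $U\otimes_{\bQ}\bC$. Let $P_c\in\GL_d(\bC)$ be the change of basis from $e_1,\ldots,e_d$ to this basis. Each $\iota(\xi_k)\in W_c$ has real coordinates $r_k\in\bR^d$ in the new basis, and $\iota(\xi_k)=P_c r_k$ in $e$-coordinates. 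Hence $\iota(B_L)=P_cR$ with $R=(r_1|\cdots|r_d)$, and
$$
\covol(\iota(\Xi_\rho(L)))=|\det R|=|\det P_c|^{-1}\,|\det\iota(B_L)|=|\det P_c|^{-1}\Delta_\rho(L)^{1/2}.
$$

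The remaining step is to show that the constant is uniform in $L$. It depends only on $c\in H$ and on the choice of orthonormal basis. We fix, once and for all, one adapted orthonormal basis for each involution $c\in H$ (including $c=1$), and set $b_{H,\rho}:=\max_c|\det P_c|^{-1}$. This is a finite maximum, so $b_{H,\rho}$ depends only on $H$, $\rho$, the inner product and the basis of $M$. In fact $|\det P_c|=\covol_{U_{\bR}}(M)^{-1}$ is independent of $c$, because multiplying some basis vectors by $i$ does not change $|\det|$.

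The only point requiring care is checking that $W_c$'s norm is exactly the one making $\{f_j\}\cup\{if_j\}$ orthonormal. This follows from the orthogonality of $U_\pm$ under the $H$-invariant Hermitian product. Otherwise the argument is a direct determinant computation.
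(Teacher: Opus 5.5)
Your proposal is correct and follows the paper's proof essentially step for step. You write $\iota(B_L)=P_cR$, where $R$ is the real coordinate matrix in an orthonormal basis of $W_c$. You then use $|\iota(\delta_\rho(L))|=\Delta_\rho(L)^{1/2}$ and take the maximum of $|\det P_c|^{-1}$ over the finitely many involutions $c\in H$.

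Your extra observation that $|\det P_c|=\covol_{U_{\bR}}(M)^{-1}$ for every $c$ is also correct. It shows the bound is actually an equality, with $b_{H,\rho}=\covol_{U_{\bR}}(M)$.
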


\begin{proof}
For each $c\in H$ satisfying $c^2=1$, choose an orthonormal real basis $e_{c,1},\ldots,e_{c,d}$ of $W_c$. Since $W_c$ is a real form of $U\otimes_{\bQ}\bC$, this is also a complex basis of $U\otimes_{\bQ}\bC$. Let $P_c\in\GL_d(\bC)$ be the matrix whose columns are the coordinates of $e_{c,1},\ldots,e_{c,d}$ with respect to the fixed basis of $M$. 
Let $A_L\in \mr{M}_d(\bR)$ be the matrix whose columns are the coordinates of $\iota(\xi_1),\ldots,\iota(\xi_d)$ with respect to
$e_{c,1},\ldots,e_{c,d}$. Then $\iota(B_L)=P_cA_L$.

Since the chosen basis of $W_c$ is orthonormal, we have
$$
\covol\bigl(\iota(\Xi_\rho(L))\bigr)
=|\det(A_L)|
=|\det(P_c)|^{-1}|\det(\iota(B_L))|
=|\det(P_c)|^{-1}|\iota(\delta_\rho(L))|.
$$
Moreover, $\delta_\rho(L)^2\in\bZ$ gives $|\iota(\delta_\rho(L))|=\Delta_\rho(L)^{1/2}$. 
Finally, there are only finitely many choices of $c \in H$ such that $c^2=1$, so the quantities $|\det(P_c)|^{-1}$ are bounded
uniformly in $c$.
\end{proof}

\begin{proposition} \label{prop2h}
There is a constant $A_{H,\rho}>0$ such that every $H$-Galois extension $L/\bQ$ admits a nonzero vector $x\in\Xi_\rho(L)$ such that
\begin{equation} \label{eq2e}
\|\iota(x)\| \le A_{H,\rho}C_U(L)^{1/(2d)}.
\end{equation}
\end{proposition}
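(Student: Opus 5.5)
The plan is to apply Minkowski's first theorem to the lattice $\iota(\Xi_\rho(L))$ inside the $d$-dimensional real Euclidean space $W_c$, and then convert the covolume bound into a conductor bound using Lemma \ref{lem2g} and Proposition \ref{prop2e}.

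First, $\iota(\Xi_\rho(L))$ is a full-rank lattice in $W_c$. By Lemma \ref{lem2b}, a $\bZ$-basis $\xi_1,\ldots,\xi_d$ of $\Xi_\rho(L)$ is an $L$-basis of $L\otimes_{\bQ}U$. Hence $\iota(\xi_1),\ldots,\iota(\xi_d)$ are $\bC$-linearly independent in $U\otimes_{\bQ}\bC$, and in particular $\bR$-linearly independent. We showed above that they lie in $W_c$, which has real dimension $d$. So they form a lattice of rank $d$ in $W_c$, with covolume $|\det(A_L)|\neq 0$ as computed in the proof of Lemma \ref{lem2g}. The norm on $W_c$ is Euclidean, since $\|u+iv\|^2=\|u\|^2+\|v\|^2$ with $U_+\perp U_-$ for an $H$-invariant inner product. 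Therefore Minkowski's theorem applies to the closed ball of radius $r$ in $W_c$, whose volume is $\omega_d r^d$, where $\omega_d$ is the volume of the unit $d$-ball. It yields a nonzero lattice point in the ball as soon as $\omega_d r^d\ge 2^d\covol(\iota(\Xi_\rho(L)))$. So there is $0\ne x\in\Xi_\rho(L)$ with
$$\|\iota(x)\|\le 2\,\omega_d^{-1/d}\covol\bigl(\iota(\Xi_\rho(L))\bigr)^{1/d}.$$
Here $x\neq 0$ because $\iota$ is injective on $L\otimes_{\bQ}U$.

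Next we chain the earlier estimates. Lemma \ref{lem2g} gives $\covol(\iota(\Xi_\rho(L)))\le b_{H,\rho}\Delta_\rho(L)^{1/2}$, and Proposition \ref{prop2e} gives $\Delta_\rho(L)\le c_{H,\rho}C_U(L)$. Combining these,
$$\|\iota(x)\|\le 2\,\omega_d^{-1/d}\, b_{H,\rho}^{1/d}\, c_{H,\rho}^{1/(2d)}\, C_U(L)^{1/(2d)},$$
so we may take $A_{H,\rho}:=2\omega_d^{-1/d}b_{H,\rho}^{1/d}c_{H,\rho}^{1/(2d)}$. This constant depends only on $H$ and $\rho$: the element $c$ varies with $L$ and $\iota$, but Lemma \ref{lem2g} is already uniform over the finitely many involutions $c$.

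The argument has essentially no obstacle. The only step needing care is verifying that the covolume in Lemma \ref{lem2g} is measured with respect to the same Euclidean structure on $W_c$ used in \eqref{eq2e}. This holds because the orthonormal basis $e_{c,j}$ there was chosen for the restricted Hermitian norm.
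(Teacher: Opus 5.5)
Your proposal is correct and follows the paper's proof. Like the paper, you combine Lemma \ref{lem2g} and Proposition \ref{prop2e} to get $\covol(\iota(\Xi_\rho(L)))\ll_{H,\rho} C_U(L)^{1/2}$ and then apply Minkowski's first theorem in the $d$-dimensional Euclidean space $W_c$; you also spell out the full-rank check and the explicit constant, which the paper leaves implicit.
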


\begin{proof}
By Proposition \ref{prop2e} and Lemma \ref{lem2g}, we have $\covol(\iota(\Xi_\rho(L)))\ll_{H,\rho}C_U(L)^{1/2}$. The assertion follows from Minkowski's first theorem \cite[Chapter III, Section 2]{Cas59}.
\end{proof}

\subsection{Finite invariant maps} \label{Sub23}

In this subsection, we assume that $U$ is irreducible over $\bQ$. 

\begin{lemma} \label{lem2i}
Let $x\in\Xi_\rho(L)$ be a nonzero vector, and choose a basis of $M$ so that $x=(x_1,\ldots,x_d)$. Then the normal closure of $E_x:=\bQ(x_1,\ldots,x_d)$ over $\bQ$ is $L$.
\end{lemma}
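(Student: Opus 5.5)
Let $\widetilde E$ denote the normal closure of $E_x$ over $\bQ$. Since every coordinate $x_j$ lies in $\cO_L$ and $L/\bQ$ is Galois, we get $E_x\subseteq L$ and hence $\widetilde E\subseteq L$. It therefore suffices to show $\Gal(L/\widetilde E)$ is trivial. Under the fixed isomorphism $\Gal(L/\bQ)\cong H$, this subgroup corresponds to a normal subgroup $N\trianglelefteq H$. Every $g\in N$ fixes all $x_j$, so $g(x)=x$. The tuning relation \eqref{eq2a} then gives $\rho(g^{-1})x=g(x)=x$, that is, $\rho(g)x=x$ for all $g\in N$.

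Next, exploit normality of $N$ to spread this fixedness over all translates of $x$. For $h\in H$, the element $h(x)$ again satisfies the tuning relation (since $\Xi_\rho(L)$ is stable under the Galois action composed with $\rho$). Concretely, $h(x)=\rho(h^{-1})x$. For $g\in N$ we have $h g h^{-1}\in N$, so
\[
\rho(g)\rho(h^{-1})x=\rho(h^{-1})\rho(hgh^{-1})x=\rho(h^{-1})x .
\]
Hence $N$ acts trivially, via $\rho$, on the $L$-span of $\{\rho(h)x:h\in H\}$ inside $L\otimes_{\bQ}U$. Let $U^N\subseteq U$ be the $\bQ$-subspace of $N$-fixed vectors. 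It is $H$-stable because $N$ is normal. Since $\rho(N)$ acts $L$-linearly and is defined over $\bQ$, we have $(L\otimes_{\bQ} U)^N=L\otimes_{\bQ}U^N$, so $x\in L\otimes_{\bQ}U^N$ with $x\neq0$. In particular $U^N\neq0$.

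By irreducibility of $U$ over $\bQ$ (the standing assumption of Subsection \ref{Sub23}), the nonzero $H$-subrepresentation $U^N$ must equal $U$. So $\rho(g)$ is the identity for every $g\in N$. Since $\rho$ is faithful, $N=1$, giving $\widetilde E=L$.

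The only delicate step is the passage from ``$x$ is fixed by $\rho(N)$'' to ``$\rho(N)$ is trivial''. This needs both the normality of $N$ and the descent identity $(L\otimes U)^N=L\otimes U^N$. The latter is immediate: $N$ acts only on the $U$ factor and $L$ is flat over $\bQ$, so taking invariants commutes with $L\otimes_{\bQ}-$. With these two facts the irreducibility argument closes the proof. Note that we never needed the $H$-translates beyond observing that $U^N$ is an $H$-subrepresentation; the displayed computation simply confirms consistency.
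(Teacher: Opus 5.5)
Your proof is correct and is essentially the paper's argument: your $N=\Gal(L/\widetilde E)$ is exactly the paper's $\Core_H(S_x)$ with $S_x=\Gal(L/E_x)$, and your remaining steps match the paper's ($\rho(N)$ fixes $x$ by the tuning relation, $(L\otimes_{\bQ}U)^N=L\otimes_{\bQ}U^N$ forces $U^N\ne0$, and irreducibility plus faithfulness give $N=1$). The one flaw is a false side remark in the inessential paragraph on translates: the parenthetical claim that $h(x)$ again satisfies the tuning relation fails in general, since it holds for a given $h$ only if $[g,h]\in S_x$ for every $g\in H$, and this cannot hold for all $h$ unless $H$ is abelian; your computation only uses $h(x)=\rho(h^{-1})x$, the tuning relation for $x$ itself, so you can simply delete that paragraph.
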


\begin{proof}
Let $S_x:=\{h\in H:h(x_j)=x_j\text{ for every }j\}$. Then $S_x=\Gal(L/E_x)$, so $E_x=L^{S_x}$. Let
$$
N:=\Core_H(S_x)=\bigcap_{h\in H}hS_xh^{-1}.
$$
For $n\in N$, $n^{-1} \in N \subseteq S_x$ so the
tuning relation gives $\rho(n)x=n^{-1}(x)=x$. Now let $N$ act on $L\otimes_{\bQ}U$ trivially on the first factor
and through $\rho$ on the second factor. Then 
$$
0 \ne x\in(L\otimes_{\bQ}U)^N=L\otimes_{\bQ}U^N,
$$
and therefore $U^N\ne 0$. Since $N$ is normal in $H$, the subspace
$U^N$ is an $H$-subrepresentation of the irreducible
$H$-representation $U$. Hence $U^N=U$ and $\rho$ is faithful, so $N=1$. Now the normal closure of $L^{S_x}$ in $L$ is $L^{\Core_H(S_x)}=L$, as required.
\end{proof}

Write $M^\vee:=\Hom_{\bZ}(M,\bZ)$ for the dual lattice and $\bZ[M]:=\operatorname{Sym}_{\bZ}(M^\vee)$ for the ring of integral polynomial functions on $M$. Suppose that the map
\begin{equation*}
F=(f_1,\ldots,f_d):U\rightarrow\bA^d
\end{equation*}
is a finite $H$-invariant morphism defined over $\bQ$. After multiplying each $f_i$ by a suitable nonzero integer, we may assume that $f_i\in\bZ[M]^H$ for each $i$. 

\begin{proposition} \label{prop2j}
With the notation above,
\begin{equation} \label{eq2f}
N(X;H,U)\ll_{H,\rho,F}X^{\frac{1}{2d}\sum_{i=1}^{d} \deg f_i}.
\end{equation}
\end{proposition}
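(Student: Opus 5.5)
We bound $N(X;H,U)$ by an injective map into integer points of a box: to each surjection $\varphi$ with $C_U(\varphi)\le X$ we attach the $H$-Galois extension $L=L_\varphi$, a short tuning vector $x\in\Xi_\rho(L)$, and the value $F(x)\in\bZ^d$.

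\emph{Step 1: a short tuning vector.} By Proposition \ref{prop2h}, there is a nonzero $x\in\Xi_\rho(L)$ with $\|\iota(x)\|\le A_{H,\rho}X^{1/(2d)}$ for a chosen embedding $\iota$.

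\emph{Step 2: $F(x)$ is a bounded integer vector.} Since $f_i\in\bZ[M]^H$ and $x\in \cO_L\otimes M$, we have $f_i(x)\in\cO_L$. For $g\in H$, the tuning relation gives $g(f_i(x))=f_i(g(x))=f_i(\rho(g^{-1})x)=f_i(x)$, because $f_i$ has coefficients in $\bZ$ and is $H$-invariant. Hence $f_i(x)\in\cO_L^H=\bZ$. The same computation with $g\in H$ shows that every conjugate $\sigma(x)$ equals $\rho(\cdot)x$ up to relabeling, so the bound on $\|\iota(x)\|$ is the relevant one. (In fact $f_i(x)\in\bZ$ lets us evaluate at a single embedding $\iota$.) Since $f_i$ is a polynomial of degree $\deg f_i$, we get $|f_i(x)|=|f_i(\iota(x))|\ll_{F}(1+\|\iota(x)\|)^{\deg f_i}\ll X^{\deg f_i/(2d)}$. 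The number of possible values $F(x)\in\bZ^d$ is therefore $\ll X^{\frac1{2d}\sum\deg f_i}$. (We may assume each $\deg f_i\ge1$; a constant $f_i$ cannot occur in a finite map unless $d=0$.)

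\emph{Step 3: bounded fibers.} This step is the main obstacle: we must show that each value $y=F(x)\in\bZ^d$ arises from only $O_{H,\rho,F}(1)$ surjections $\varphi$. Since $F$ is finite, the fiber $F^{-1}(y)\subset U_{\ol\bQ}$ is a finite set. Its size is at most $\deg F\le\prod\deg f_i$, by Bézout or by the degree of the finite map. The point $x\in F^{-1}(y)(L)$ has coordinates $x_1,\dots,x_d$ in a fixed basis of $M$. By Lemma \ref{lem2i}, $L$ is the normal closure of $\bQ(x_1,\dots,x_d)$, so $L$ is determined by the point $x\in F^{-1}(y)(\ol\bQ)$. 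Hence at most $\deg F$ fields $L$ arise from each $y$. Each field $L$ carries at most $|\mr{Aut}(H)|$ identifications $\Gal(L/\bQ)\cong H$, so each $L$ corresponds to at most $|\mr{Aut}(H)|$ surjections $\varphi$. The one subtlety is that the embedding $\iota$ in Step 1 depends on $L$. This causes no problem, because $x$ is an algebraic point determined as an element of $\cO_L\otimes M\subset \ol\bQ^d$ via the fixed $\ol\bQ$, and the map $\varphi\mapsto x\in F^{-1}(y)(\ol\bQ)$ needs no embedding.

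Multiplying the count of values from Step 2 by the fiber bound from Step 3 gives \eqref{eq2f}.
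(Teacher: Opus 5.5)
Your proposal is correct and follows the paper's proof essentially step for step: a short tuning vector from Proposition \ref{prop2h}, integrality and size bounds for the invariant values $f_i(x)\in\bZ$, uniformly bounded fibers of the finite map $F$, recovery of $L$ as a normal closure via Lemma \ref{lem2i}, and $O_H(1)$ identifications $\Gal(L/\bQ)\cong H$. Your explicit fiber bound $\#F^{-1}(y)\le\deg F\le\prod_i\deg f_i$, and your remark that $x$ is located in $\ol{\bQ}^d$ independently of $\iota$, make precise what the paper leaves implicit; one small correction is that the assignment $\varphi\mapsto F(x)$ is finite-to-one rather than injective, as your opening sentence says, which is exactly what your Step 3 establishes.
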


\begin{proof}
Write $k_i:=\deg f_i$. For every surjection $\varphi:G_{\bQ}\twoheadrightarrow H$ counted by $N(X;H,U)$, let $L=L_\varphi$ and choose an embedding $\iota:L\hookrightarrow\bC$. By Proposition \ref{prop2h} and the inequality $C_U(L)\le X$, there exists a nonzero
$$
x_\varphi=(x_{\varphi,1},\ldots,x_{\varphi,d})\in\Xi_\rho(L)
$$ 
such that $\|\iota(x_\varphi)\| \ll_{H,\rho} X^{1/(2d)}$. For each $\varphi$, fix one such vector $x_\varphi$. Since $f_i$ is $H$-invariant, we have
$$
h(f_i(x_\varphi))=f_i(h(x_\varphi))
=f_i(\rho(h^{-1})x_\varphi)=f_i(x_\varphi)
$$
for every $h\in H$, so $f_i(x_\varphi)\in\bQ$. Moreover, since $x_{\varphi,1},\ldots,x_{\varphi,d}\in\cO_L$ and $f_i$ has coefficients in $\bZ$, we have $f_i(x_\varphi)\in\bZ$. It is clear that $|f_i(x_\varphi)|=|\iota(f_i(x_\varphi))|=|f_i(\iota(x_\varphi))|$.

As $f_i$ is a fixed polynomial of degree $k_i$, $|f_i(z)|\ll_{U,F}(1+\|z\|)^{k_i}$ for $z\in U\otimes_{\bQ}\bC$. Since $U=M\otimes_{\bZ}\bQ$ is fixed once $\rho:H\to\GL(M)$ is fixed, we absorb the dependence on $U$ into that on $\rho$. Combining this with the bound $\|\iota(x_\varphi)\| \ll_{H,\rho}X^{1/(2d)}$, we obtain
$$
|f_i(x_\varphi)|=|f_i(\iota(x_\varphi))|\ll_{H,\rho,F} X^{k_i/(2d)}.
$$
Hence the number of possible tuples $F(x_\varphi)=(f_1(x_\varphi),\ldots,f_d(x_\varphi)) \in\bZ^d$ is
$$
\ll_{H,\rho,F}\prod_{i=1}^dX^{k_i/(2d)}=X^{(k_1+\cdots+k_d)/(2d)}.
$$
Since $F$ is a finite morphism, the number of possible $x_\varphi \in \Xi_\rho(L)$ is also $\ll_{H,\rho,F} X^{(k_1+\cdots+k_d)/(2d)}$.

By Lemma \ref{lem2i}, each such $x_\varphi$ determines $L$ as the normal closure of $\bQ(x_{\varphi,1},\ldots,x_{\varphi,d})/\bQ$. For a fixed $L$, the corresponding surjections $\varphi$ are parametrized by isomorphisms $\Gal(L/\bQ) \cong H$, and there are $O_H(1)$ such isomorphisms. We conclude that \eqref{eq2f} holds.
\end{proof}

We next construct a finite invariant map from one orbit of covectors. Define
\begin{equation*}
\mu(H,U):=\min_{0\ne\ell\in U^\vee}|H\cdot\ell|.
\end{equation*}

\begin{lemma} \label{lem2k}
Let $m=\mu(H,U)$. There exists a finite $H$-invariant morphism $F:U\to\bA^d$ defined by integral polynomials of degree at most $m$.
\end{lemma}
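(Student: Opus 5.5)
The plan is to take a covector $\ell\in U^\vee$ with $|H\cdot\ell|=m$ and build all the invariants from the orbit $H\cdot\ell=\{\ell_1,\ldots,\ell_m\}$. Each $\ell_j$ is a linear form on $U$. After scaling $\ell$ by a nonzero integer we may assume $\ell\in M^\vee$, and then every $\ell_j\in M^\vee$ because $H$ preserves $M^\vee$.

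\emph{Step 1: the orbit spans $U^\vee$.} The $\bQ$-span of $\ell_1,\ldots,\ell_m$ is a nonzero $H$-stable subspace of $U^\vee$. The dual $U^\vee$ is irreducible since $U$ is, so this span is all of $U^\vee$. In particular, the common zero locus of $\ell_1,\ldots,\ell_m$ in $U_{\ol{\bQ}}$ is $\{0\}$.

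\emph{Step 2: symmetric functions of the orbit.} Let $e_k:=e_k(\ell_1,\ldots,\ell_m)\in\bZ[M]^H$ for $1\le k\le m$. These are $H$-invariant because $H$ permutes the $\ell_j$, and $e_k$ is homogeneous of degree $k\le m$. Each $\ell_j$ is a root of
\[
\prod_{i=1}^m(T-\ell_i)=T^m-e_1T^{m-1}+\cdots\pm e_m,
\]
so $\ell_j$ is integral over $A:=\bQ[e_1,\ldots,e_m]$. By Step 1 the $\ell_j$ generate $\bQ[U]$ as a $\bQ$-algebra, so $\bQ[U]$ is a finite $A$-module. Equivalently, $E=(e_1,\ldots,e_m):U\to\bA^m$ is a finite $H$-invariant morphism of degree at most $m$. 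Its image $Y$ has dimension $d$, and $A$ is a graded domain of Krull dimension $d$.

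\emph{Step 3: cutting down from $m$ to $d$ coordinates (the main obstacle).} We need $d$ elements $f_1,\ldots,f_d\in A$, each of degree at most $m$, over which $A$ (and hence $\bQ[U]$) is finite. Composing then gives the desired $F$. The difficulty is the degree bound. The textbook graded Noether normalization would replace $e_k$ by $e_k^{D/k}$ with $D=\operatorname{lcm}(1,\ldots,m)$, which destroys the bound.

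What I would try first is to take generic $\bZ$-linear combinations
\[
f_i=\sum_k c_{ik}e_k, \qquad 1\le i\le d,
\]
which have degree at most $m$ but are not homogeneous. This suffices for Proposition~\ref{prop2j}, since that argument only uses that $F$ is finite and that $\deg f_i\le m$. To show finiteness, I would work with the positive grading $\deg y_k=k$ on $\bA^m$. Here $Y$ is a weighted cone, so finiteness of $(f_1,\ldots,f_d)|_Y$ reduces to showing that the weighted-leading parts of the $f_i$ have no common zero on $Y$ other than $0$. This is a condition at infinity in the weighted projective closure of $Y$, which has dimension $d-1$. I would prove it by a dimension count showing that $d$ generic such conditions avoid this $(d-1)$-dimensional set, being careful that the leading parts of generic combinations are dominated by the top-weight coordinates.

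If that dimension count fails to go through cleanly, my fallback is a homogeneous construction in a single degree $m$. I would look for invariants of degree exactly $m$ with only the trivial common zero, for example products of $e_k$ with lower-degree invariants where those exist, or norm-type forms $\prod_j(\ell_j+\ell'_j)$ built from a second orbit. Then ordinary homogeneous Noether normalization by generic linear combinations in that single degree would give $d$ forms of degree $m$.
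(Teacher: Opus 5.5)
Your construction is the paper's. You scale $\ell$ into $M^\vee$ and use that the orbit spans $U^\vee$ to see that $(e_1,\ldots,e_m):U\to\bA^m$ is finite and $H$-invariant with $d$-dimensional image $Y$. You then compose with a general linear projection $\bA^m\to\bA^d$, so the $f_i$ are linear combinations of the $e_k$ and have degree at most $m$. Steps 1 and 2 are fine.

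\textbf{The gap is in Step 3.} Finiteness of the projection on $Y$ is the whole content of that step, and the argument you propose for it cannot work. With the weights $\deg y_k=k$, the weighted-leading part of a generic $f_i=\sum_k c_{ik}e_k$ is $c_{im}e_m$ for every $i$. So your $d$ conditions at infinity collapse to the single condition $e_m=0$. Since $e_m=\prod_j\ell_j$ vanishes on each hyperplane $\ker\ell_j\subset U$, for $d\ge2$ the leading parts always have nonzero common zeros. Your criterion, which is only sufficient for finiteness, is therefore never met. The ``care'' you mention about top-weight coordinates is exactly what destroys the dimension count.

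The fallback is not carried out, and it cannot stay inside $A$. Take the $2$-dimensional irreducible rational representation of $C_3$: there $m=3$, $d=2$, $e_1=0$, and the degree-$3$ part of $A$ is spanned by $e_3$ alone. So you would need invariants from outside $A$, together with an existence argument in degree exactly $m$ that you do not supply.

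\textbf{The missing ingredient} is ordinary (ungraded) Noether normalization over the infinite field $\bQ$, which is what the paper invokes. For a $d$-dimensional closed subvariety $Y\subset\bA^m$, a general $\bQ$-linear projection $\pi:\bA^m\to\bA^d$ restricts to a finite map on $Y$. In the language of your sketch, this means using the standard grading on $\bA^m$ instead of the weighted one. The $f_i$ are then linear, hence equal to their own leading forms. Moreover, $\pi|_Y$ is finite as soon as $\mathbb{P}(\ker\pi)$ misses $\overline{Y}\cap\mathbb{P}^{m-1}$. Here $\mathbb{P}(\ker\pi)$ is a linear space of dimension $m-d-1$ in the hyperplane at infinity $\mathbb{P}^{m-1}$, and $\overline{Y}\cap\mathbb{P}^{m-1}$ has dimension $d-1$. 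A general $\pi$ achieves this because $(m-d-1)+(d-1)<m-1$. The admissible $\pi$ form a nonempty Zariski-open set, so integer coefficients can be chosen.

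No homogeneity of the $f_i$ is needed anywhere, which is why the bound $\deg f_i\le m$ survives. Your worry about $e_k^{D/k}$ concerns only the graded version of normalization, which is not required here.
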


\begin{proof}
Choose $0\ne\ell\in U^\vee$ whose $H$-orbit has size $m$. By multiplying $\ell$ by a suitable nonzero integer if necessary, we may assume that $\ell \in M^\vee$. Write $H\cdot\ell=\{\ell_1,\ldots,\ell_m\}$. Since $U^\vee$ is irreducible, the orbit $H\cdot\ell$ spans $U^\vee$. Hence $\bigcap_{i=1}^{m} \ker(\ell_i) = 0$, so the map $j:U\to\bA^m$ given by $j(v)=(\ell_1(v),\ldots,\ell_m(v))$ is a closed immersion.

Define the map $q:\bA^m\to\bA^m$ by $q(y)=(e_1(y),\ldots,e_m(y))$, where $e_i$ is the $i$-th elementary symmetric polynomial in $m$ variables. The map $q$ is finite, and $q\circ j$ is $H$-invariant because $H$ permutes $\ell_1, \ldots, \ell_m$. Moreover, the image $Y:=(q\circ j)(U)$ is a closed affine variety of dimension $d$, and $U\to Y$ is a finite morphism. 
A version of Noether’s normalization lemma \cite[Theorem 13.3]{Eis95}, together with its geometric interpretation in \cite[p. 284]{Eis95}, gives a rational linear projection $\pi:\bA^m\to\bA^d$ whose restriction to $Y$ is finite. Now $F = (f_1, \ldots, f_d) :=\pi\circ q\circ j:U\to\bA^d$ is finite and $H$-invariant. Its coordinate functions are linear combinations of $e_1,\ldots,e_m$, so their degrees are at most $m$. Clearing denominators in each $f_i$ preserves finiteness of $F$ and gives integral coefficients.
\end{proof}

Applying Proposition \ref{prop2j} to the map $F$ of Lemma \ref{lem2k} gives
\begin{equation} \label{eq2g}
N(X;H,U) \ll_{H,\rho} X^{(d\mu(H,U))/(2d)} = X^{\mu(H,U)/2}.
\end{equation}

\section{The general upper bound} \label{Sec3}

Fix a finite nontrivial subgroup $H$ of $\GL_n(\bZ)$, and retain the notation $V$, $W_i$, $a_i$, $K_i$ and $H_i$ introduced in Section \ref{Sub12}. 
Fix a surjection $\varphi:G_{\bQ}\twoheadrightarrow H$, and let
$L=L_\varphi$ be the corresponding $H$-Galois extension. The quotient map $\varphi_i:G_{\bQ}\twoheadrightarrow H_i$ corresponds to the field $L_i:=L^{K_i}$.
Since the original representation of $H$ on $V$ is faithful, $\bigcap_{i=1}^rK_i=1$. Thus the homomorphism $H\to\prod_{i=1}^rH_i$ is injective, and the tuple $(\varphi_1,\ldots,\varphi_r)$ determines $\varphi$. 

Let $C_i(\varphi_i):=C_{W_i}(\varphi_i)$. Since $\varphi^*V\cong\trivrep^{\oplus a_0}\oplus\bigoplus_{i=1}^r(\varphi_i^*W_i)^{\oplus a_i}$, we have
\begin{equation} \label{eq3b}
C_V(\varphi)=\prod_{i=1}^r C_i(\varphi_i)^{a_i}.
\end{equation}
Let $\Theta:=\Theta(H,V)$ and $s:=s(H,V)$, with the notation of \eqref{eq1c}.

\begin{proof}[Proof of Theorem \ref{thm1c}]
Since $H_i$ is finite, $W_i$ contains an $H_i$-stable lattice. Hence for every $i$, \eqref{eq2g} gives
\begin{equation} \label{eq3c}
\#\{\varphi_i:G_{\bQ}\twoheadrightarrow H_i:C_i(\varphi_i)\le Y\} 
= N(Y; H_i, W_i) \ll_H Y^{\mu(H_i, W_i)/2} = Y^{\mu_i/2}.
\end{equation}
By \eqref{eq3b}, every surjection $\varphi:G_{\bQ}\twoheadrightarrow H$ with $C_V(\varphi)\le X$ is determined by the tuple $(\varphi_1,\ldots,\varphi_r)$ satisfying $\prod_{i=1}^rC_i(\varphi_i)^{a_i}\le X$.

By permuting the indices, we may assume that $\frac{\mu_i}{2a_i}=\Theta$ for $i \le s$ and $\frac{\mu_i}{2a_i} < \Theta$ for $i>s$. Write $\beta_i:=\mu_i/2$, $\delta_i:=\Theta a_i-\beta_i$ and $L:=\lfloor\log_2X\rfloor$. Then $\delta_i=0$ for $i\le s$ and $\delta_i>0$ for $i>s$.
Using the dyadic ranges $2^{k_i}\le C_i(\varphi_i)<2^{k_i+1}$, \eqref{eq3c} gives
\begin{align*}
N(X;H,V)
&\le \sum_{\substack{k_1,\ldots,k_r\ge0\\ \sum_{i=1}^{r} a_i k_i\le L}}
\prod_{i=1}^r \#\left\{ \varphi_i:G_{\bQ}\twoheadrightarrow H_i:
2^{k_i}\le C_i(\varphi_i)<2^{k_i+1} \right\} \\
&\ll_H \sum_{\substack{k_1,\ldots,k_r\ge0\\ \sum_{i=1}^{r} a_i k_i\le L}} \prod_{i=1}^r 2^{(k_i+1)\beta_i} \\
&\ll_H \sum_{0\le m\le L}2^{\Theta m} \sum_{\substack{k_1,\ldots,k_r\ge0\\ \sum_{i=1}^{r} a_i k_i=m}} 2^{-\sum_{i>s} \delta_i k_i}.
\end{align*}
We also have
\begin{align*}
\sum_{\substack{k_1,\ldots,k_r\ge0\\ \sum_{i=1}^{r} a_i k_i=m}} 2^{-\sum_{i>s} \delta_i k_i}
&\le \sum_{k_{s+1},\ldots,k_r\ge0} 2^{-\sum_{i>s} \delta_i k_i}
\# \left\{ (k_1, \ldots, k_s) \in \bZ_{\ge 0}^{s} : \sum_{i=1}^{s} a_i k_i=m-\sum_{i=s+1}^{r} a_i k_i \right\} \\
&\ll_H (1+m)^{s-1} \sum_{k_{s+1},\ldots,k_r\ge0} 2^{-\sum_{i>s} \delta_i k_i} \\
&\ll_H (1+m)^{s-1},
\end{align*}
so
$$
N(X;H,V) \ll_H \sum_{0\le m\le L}2^{\Theta m}(1+m)^{s-1} \ll_H 2^{\Theta L}(1+L)^{s-1} \ll_H X^\Theta(\log X)^{s-1}.
$$
By Lemma \ref{lem2a}, $N_n^{\tor}(X; H) \le N(X;H,V)$. This completes the proof.
\end{proof}

Next we move to the proof of Theorem \ref{thm1b}. Before proving Theorem \ref{thm1b}, we introduce several lemmas. 
Let $m \ge 2$ be an integer, let $S_m$ (resp. $A_m$) denote the symmetric (resp. alternating) group on $m$ letters and write $\lambda \vdash m$ if $\lambda$ is a partition of $m$. For $\lambda=(\lambda_1, \ldots, \lambda_t) \vdash m$, let $\lambda'$ be its conjugate partition and $S_\lambda:=S_{\lambda_1}\times S_{\lambda_2}\times\cdots \times S_{\lambda_t}\le S_m$ be the corresponding Young subgroup. Write $S^\lambda$ for the complex Specht module associated with $\lambda$, and let $f^\lambda:=\dim_{\bC}S^\lambda$. Over $\bC$, this module affords the irreducible $S_m$-representation denoted by $[\lambda]$ in \cite{JK81}; see \cite[Theorem 3.1.10 and Lemma 7.1.4]{JK81}. In the following, we identify $S^\lambda$ with $[\lambda]$ and use the notation $S^\lambda$ throughout.

\begin{lemma} \label{lem3d}
There is an absolute constant $c_1>0$ with the following property. Let $m\ge10$ and $X$ be a nontrivial irreducible complex representation of $A_m$ of dimension $a \ge 1$. Then there exists a subgroup $H\le A_m$ such that $X^H\ne0$ and 
$$
[A_m:H]\le\exp(c_1(\log 2a)^2).
$$
\end{lemma}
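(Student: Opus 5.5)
We will deduce the statement from the corresponding facts for $S_m$ via restriction. Every nontrivial irreducible complex representation $X$ of $A_m$ is a constituent of $\Res^{S_m}_{A_m} S^\lambda$ for some $\lambda \vdash m$ with $\lambda\ne(m),(1^m)$. Either $\Res S^\lambda = X$ (when $\lambda\ne\lambda'$), or $\Res S^\lambda = X\oplus X'$ with $\dim X = f^\lambda/2$ (when $\lambda=\lambda'$). In both cases $f^\lambda\le 2a$.

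Replacing $\lambda$ by $\lambda'$ if necessary, we may assume $\lambda_1\ge \lambda'_1$. Write $k:=m-\lambda_1$, which is the number of boxes outside the first row. For the Young subgroup $S_\lambda$, Young's rule gives $\langle \Ind_{S_\lambda}^{S_m}\trivrep, S^\lambda\rangle = 1$, so $(S^\lambda)^{S_\lambda}\ne0$. We then take
$$
H:=S_\lambda\cap A_m.
$$
Since $(S^\lambda)^{S_\lambda}\subseteq (S^\lambda)^{H}$ and the fixed space of $H$ in $\Res S^\lambda$ is the sum of those in the constituents, at least one constituent has $H$-fixed vectors. If $\lambda=\lambda'$ and the fixed vector lies in $X'$ rather than $X$, we conjugate $H$ by an odd permutation $\sigma\in S_m$, which swaps $X$ and $X'$. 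Hence some $A_m$-conjugate or $S_m$-conjugate $H^\sigma\le A_m$ satisfies $X^{H^\sigma}\neq0$, and it has the same index. We have $[A_m:H]\le [S_m:S_\lambda]$, up to a factor $2$ coming from the case $S_\lambda\le A_m$.

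It remains to bound $[S_m:S_\lambda]=m!/\prod\lambda_i!$ by $\exp(c(\log 2f^\lambda)^2)$. First, $[S_m:S_\lambda]\le \binom{m}{k}\cdot k!/\prod_{i\ge2}\lambda_i!\le \binom{m}{k}k!\le m^{k}$. We now need a lower bound on $f^\lambda$ in terms of $m$ and $k$. By the branching rule (or the hook length formula), $f^\lambda\ge \binom{m-k}{k}$ roughly. More precisely, $f^\lambda\ge f^{(\lambda_1)}\cdot\#\{\text{fillings}\}$, and a standard estimate gives $f^\lambda\ge \binom{\lambda_1}{k}\ge (\lambda_1/k)^k$ whenever $\lambda_1\ge 2k$-ish. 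Since $\lambda_1\ge\lambda_1'$, we have $\lambda_1\ge\sqrt m$.

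We split into cases. If $k\le m/2$, then $f^\lambda\ge (m/(2k))^k$. Also $f^\lambda\ge m-1$ (the minimal nontrivial degree for $m\ge 5$, up to sign twist), which gives $\log f^\lambda \gg \log m$. Combined with $f^\lambda\ge 2^k$, this gives $\log f^\lambda\gg k$. Hence $\log[S_m:S_\lambda]\le k\log m\ll (\log f^\lambda)^2$, and the square arises naturally here. If $k>m/2$, both $\lambda$ and $\lambda'$ are "wide". In that case we use the known bound $f^\lambda\ge c^m$, or more crudely $\log f^\lambda\gg \sqrt m$, which follows from the hook length formula when the first row and column are both at most about $m/2$. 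Then $\log[S_m:S_\lambda]\le \log m!\le m\log m\ll(\log f^\lambda)^2$. Since $\lambda_1\ge\sqrt m$ but $k>m/2$, this sub-case needs the exponential-type lower bound $\log f^\lambda\gg m/\log$-ish. Otherwise we pick a different subgroup, namely $S_{\lambda_1}\times$ a smaller Young subgroup.

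The main obstacle is this second case, which requires a quantitative lower bound $f^\lambda\ge \exp(c\sqrt{m\log m})$ for partitions with $\max(\lambda_1,\lambda_1')\le m-k$, $k>m/2$. We will attempt it via the hook length formula, bounding $\prod$ hooks by comparing with the rectangular-ish extremal shape, using the fact that at least $m/2$ boxes lie in neither the first row nor the first column. Given such a bound, $(\log f^\lambda)^2\gg m\log m\ge\log m!$, and the claim follows with an absolute $c_1$. The hypothesis $m\ge10$ only ensures the minimal-degree facts and the restriction dichotomy without exceptional small cases.
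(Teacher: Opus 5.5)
Your reduction is the same as the paper's. You pass to $\mu\in\{\lambda,\lambda'\}$ with $\mu_1\ge\mu_1'$, take $H=A_m\cap S_\mu$ (conjugated by an odd permutation when $\lambda=\lambda'$ and the fixed vector lands in the other constituent), and use $[A_m:H]\le[S_m:S_\mu]\le m^k$ with $k=m-\mu_1$. After that, everything rests on two estimates, $\log m\ll\log 2f$ and $k\ll\log 2f$, where $f=f^\lambda\le 2a$. The first you have, from $f^\lambda\ge m-1$. The second is the real content of the lemma, and your proposal does not prove it.

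\textbf{Where the argument breaks.}

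In your case $k\le m/2$, the bound $f^\lambda\ge\binom{\lambda_1}{k}\ge(m/(2k))^k$ gives $\log f\gg k$ only when $k\le cm$ for some fixed $c<1/2$. As $k$ approaches $m/2$ it degenerates; at $\lambda_1=k=m/2$ it says only $f\ge 1$. The inequality ``$f^\lambda\ge 2^k$'' that you then invoke is exactly what needs proof, and it is not derived anywhere. The bound $f\ge m-1$ cannot substitute for it: it gives only $(\log f)^2\gg(\log m)^2$, far below $k\log m$ when $k\asymp m$.

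In your case $k>m/2$, you say yourself that the needed lower bound on $f^\lambda$ is an unresolved obstacle. The alternatives you offer (comparison with a ``rectangular-ish extremal shape'', or switching to a different subgroup) are not carried out. So the proof is incomplete precisely at its key step.

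\textbf{The missing idea.} You need a lower bound on $f^\mu$ that is uniform in $k$. The paper gets it from two explicit families of standard tableaux, with $p=\mu_1\ge q=\mu_1'$.

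\emph{First row and column.} Fill the first row and first column with $1,\dots,p+q-1$, using any of the $\binom{p+q-2}{q-1}$ admissible interleavings, and complete the rest arbitrarily. This gives $f\ge\binom{2q-2}{q-1}\ge 2^{q-1}$.

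\emph{Anti-diagonals.} Fill the anti-diagonals $D_s=\{(i,j)\in\mu:i+j=s\}$ in increasing order of $s$, in any order within each $D_s$. At most $p+q-1$ anti-diagonals are nonempty, so this gives $f\ge\prod_s|D_s|!\ge 2^{m-(p+q-1)}=2^{k-q+1}$.

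Multiplying the two bounds yields $f^2\ge 2^k$, i.e. $k\ll\log 2f$ for all $k$ at once. No case split is needed, and $m^k\le\exp(c(\log 2f)^2)$ follows immediately.
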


\begin{proof}
Let $\nu\vdash m$ be neither $(m)$ nor $(1^m)$. Choose $\mu\in\{\nu,\nu'\}$ such that $p:=\mu_1\ge q:=\mu'_1$. Let $k:=m-p$ and write $f:=f^\mu=f^\nu$. Then
\begin{equation} \label{eq3d}
[S_m:S_\mu]=\frac{m!}{\prod_i\mu_i!}
\le\frac{m!}{p!}\le m^k.
\end{equation}

The dimension $f$ is equal to the number of standard tableaux of shape $\mu$ \cite[Corollary 7.2.8]{JK81}, which is given by the hook length formula \cite[Theorem 2.3.21]{JK81}. Since there are $\binom{p+q-2}{q-1}$ ways to fill the first row and the first column with the numbers $1, 2, \ldots, p+q-1$ so that the entries increase along rows and columns, and each such filling extends to a standard tableau of shape $\mu$, we have 
\begin{equation} \label{eq3f}
f \ge \binom{p+q-2}{q-1} \ge \binom{2q-2}{q-1} \ge 2^{q-1}.
\end{equation}
For $s\ge2$, consider the anti-diagonals $D_s:=\{(i,j)\in\mu:i+j=s\}$. Then ordering the anti-diagonals $D_2, D_3, \ldots$ successively and ordering the boxes within each $D_s$ arbitrarily gives a standard tableau. Since $r!\ge2^{r-1}$ for $r\ge1$ and there are at most $p+q-1$ nonempty anti-diagonals, we obtain
\begin{equation} \label{eq3g}
f\ge\prod_s|D_s|!
\ge2^{m-(p+q-1)}=2^{k-q+1}.
\end{equation}

By \eqref{eq3f} and \eqref{eq3g}, we have $f^2 \ge 2^{q-1}2^{k-q+1}=2^k$, so $k \ll \log 2f$. Moreover, $q \ge 2$ by the assumption that $\nu$ is neither $(m)$ nor $(1^m)$, so \eqref{eq3f} gives $f \ge \binom{p+q-2}{q-1} \ge p$. Therefore
$$
m=p+k \le f+\frac{2}{\log 2}\log f
$$
so $\log m \ll \log 2f$. Combining \eqref{eq3d} with the bounds $k\ll \log 2f$ and $\log m \ll \log 2f$, we obtain
\begin{equation} \label{eq3h}
[S_m:S_\mu]\le \exp(k\log m) \le \exp(c_0(\log 2f)^2)
\end{equation}
for an absolute constant $c_0>0$.

By the classification of irreducible complex representations of $A_m$ in \cite[Theorem 2.5.7]{JK81}, there is a partition $\lambda\vdash m$ such that either $\lambda\ne\lambda'$ and $X\cong S^\lambda|_{A_m} \cong S^{\lambda'}|_{A_m}$, or $\lambda=\lambda'$ and $X$ is isomorphic to one of the two irreducible constituents of $S^\lambda|_{A_m}$. In either case, $\lambda$ is neither $(m)$ nor $(1^m)$.
Now we apply the preceding argument with $\nu=\lambda$. Choose $\mu\in\{\lambda,\lambda'\}$ as above and set $f:=f^\mu=f^\lambda$. If $\lambda\ne\lambda'$, then $a=f$; if $\lambda=\lambda'$, then the two constituents of $S^\lambda|_{A_m}$ have dimension $f/2$, so $a=f/2$. Thus $f\le2a$.

By \cite[Theorem 2.1.3 and (2.1.4)]{JK81}, $S^\mu$ occurs with multiplicity one in $\Ind_{S_\mu}^{S_m}\trivrep$. By Frobenius reciprocity \cite[Section 7.2, Theorem 13]{Ser77}, we have
$$
\dim_{\bC}(S^\mu)^{S_\mu} = \left\langle \Ind_{S_\mu}^{S_m}\trivrep,\chi^\mu\right\rangle_{S_m} = \left\langle \trivrep, \Res_{S_\mu}^{S_m}\chi^\mu\right\rangle_{S_\mu}=1,
$$
where $\chi^\mu$ denotes the character of $S^\mu$, and $(S^\mu)^{S_\mu}$ denotes the fixed subspace of $S^\mu$ under the action of $S_\mu$.
Consequently, $S^\mu|_{A_m}$ has a nonzero vector fixed by $H_\mu:=A_m\cap S_\mu$. If $S^\mu|_{A_m}$ is irreducible, then it is isomorphic to $X$. If $S^\mu|_{A_m}$ has two irreducible constituents, then 
a nonzero vector in $S^\mu|_{A_m}$ fixed by $H_\mu$ has a nonzero projection to at least one constituent, and conjugation by an odd permutation interchanges the two constituents \cite[Theorem 2.5.7]{JK81}. Conjugating $H_\mu$ by that odd permutation if necessary, we obtain a subgroup $H\le A_m$ such that $X^H\ne0$. Since $H$ is conjugate to $H_\mu$ in $S_m$, \eqref{eq3h} (applied to $\nu=\lambda$) and the inequality $f \le 2a$ give
\begin{equation*}
[A_m:H]=[A_m:H_\mu] \le[S_m:S_\mu]
\le\exp(c_0(\log 2f)^2) \le\exp(4c_0(\log 2a)^2). \qedhere
\end{equation*}
\end{proof}

For a finite-dimensional complex vector space $W$, a finite subgroup $\Gamma\le\GL(W)$ is called \emph{primitive} if it acts irreducibly on $W$ and there is no decomposition $W=W_1\oplus\cdots\oplus W_r$ into $r \ge 2$ nonzero subspaces that are permuted transitively by $\Gamma$. 
The next lemma is a special case of \cite[Theorem 1.3]{Wei12}. In Weisfeiler’s notation, the characteristic exponent of $\bC$ is $1$ and groups of Lie $1$-type are trivial; hence the Lie-type subgroup $L$ appearing in that theorem is trivial. We note that both occurrences of $l$ in \cite[p. 1]{Wei12} are typographical errors for the numeral $1$.

\begin{lemma}[Weisfeiler] \label{lem3e}
Let $W$ be an $n$-dimensional complex vector space, 
$\Gamma\le\GL(W)$ be a finite primitive linear group and $Z=Z(\Gamma)$ be the center of $\Gamma$. Then $\Gamma$ contains a normal subgroup $N=N_1\times\cdots\times N_t$ ($N=1$ if $t=0$) such that for each $i$, $N_i \cong A_{m_i}$ for some $m_i \ge 10$, and 
$[\Gamma:ZN]\le n^{2\log_2n+5}$.
\end{lemma}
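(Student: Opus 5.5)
The plan is to read the lemma off from \cite[Theorem 1.3]{Wei12} by specializing it to the field $k=\bC$, rather than to reprove the Jordan-type bound, which depends on the classification of finite simple groups. I will first restate Weisfeiler's theorem in the form I need. For a finite primitive subgroup $\Gamma\le\GL_n(k)$ over an algebraically closed field $k$ of characteristic exponent $\ell$, it provides a normal subgroup of $\Gamma$ of the form $Z(\Gamma)\cdot L\cdot N$. Here $L$ is a central product of groups of Lie $\ell$-type, and $N=N_1\times\cdots\times N_t$ with each $N_i$ an alternating group $A_{m_i}$ with $m_i\ge 10$ (the small alternating groups being absorbed into the bounded part). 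The index of this normal subgroup in $\Gamma$ is at most $n^{2\log_2 n+5}$. Before using it, I will check the typographical point noted before the statement: both occurrences of ``$l$'' on \cite[p.~1]{Wei12} should be the numeral $1$, so that the theorem really asserts the bound $n^{2\log_2 n+5}$ and the threshold $m_i\ge 10$.

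Next I specialize to $k=\bC$. The characteristic of $\bC$ is $0$, so its characteristic exponent is $1$. Groups of Lie $1$-type are trivial by Weisfeiler's convention, since there is no Lie-type contribution in characteristic zero. Hence $L=1$, and the normal subgroup in the theorem reduces to $ZN$ with $Z=Z(\Gamma)$. Since $Z$ is central and $N\trianglelefteq\Gamma$, the product $ZN$ is a normal subgroup of $\Gamma$, and the index estimate transfers verbatim as $[\Gamma:ZN]\le n^{2\log_2 n+5}$. The degenerate case $t=0$, where $N=1$, is included automatically. Then the index bound says only that $[\Gamma:Z]$ is bounded, which is the classical Jordan-type conclusion for primitive groups.

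The two hypotheses of the lemma, finiteness and primitivity in $\GL(W)$ for a complex $n$-dimensional $W$, are exactly those of Weisfeiler's theorem once a basis is chosen. So no further reduction is needed. In particular, I do not need to pass from irreducible to primitive groups here; that reduction is deferred to the proof of Theorem \ref{thm1b}. The only real obstacle is bibliographic and notational. I must make sure that the conventions of \cite{Wei12} match the ones used here: that the normal subgroup is indeed a \emph{direct} product of the alternating factors, that the threshold $m_i\ge10$ is as stated, and that ``Lie $1$-type'' is trivial. If the direct-product structure were only a central product, I would note that the centers of the $A_{m_i}$ are trivial, so a central product of them is automatically direct.
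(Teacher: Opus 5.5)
Your proposal is correct and follows the paper's argument. The paper also simply specializes \cite[Theorem 1.3]{Wei12} to $k=\bC$: the characteristic exponent is $1$ and groups of Lie $1$-type are trivial, so the Lie-type factor $L$ drops out, leaving the normal subgroup $ZN$ with index at most $n^{2\log_2 n+5}$. One small misreading: in the paper, the correction of the two typographical $l$'s serves to make ``characteristic exponent $1$'' and ``Lie $1$-type'' read correctly, which is what gives $L=1$; it does not concern the exponent $2\log_2 n+5$ or the threshold $m_i\ge 10$.
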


For a finite linear group $\Gamma\le\GL(W)$, define
$$
\Omega(\Gamma):= \max\{\mr{ord}(\alpha):\alpha\text{ is an eigenvalue for some }\gamma\in\Gamma\}.
$$

\begin{lemma} \label{lem3f}
There is an absolute constant $c_2>0$ such that, for every finite subgroup $\Gamma\le\GL(W)$ acting irreducibly on $W$ with $\dim_{\bC} W = n$, there exists $0\ne w\in W$ satisfying 
$$
|\Gamma\cdot w| \le\Omega(\Gamma) \exp(c_2(\log 2n)^2).
$$
\end{lemma}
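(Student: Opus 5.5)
Prove Lemma \ref{lem3f} by induction on $n=\dim_{\bC}W$, reducing first to the primitive case and then using Lemma \ref{lem3e} together with Lemma \ref{lem3d}. The target bound is $|\Gamma\cdot w|\le \Omega(\Gamma)\exp(c_2(\log 2n)^2)$, and the induction should keep the exponent in the shape $c_2(\log 2n)^2$ while the factor $\Omega(\Gamma)$ is carried along.

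\emph{Imprimitive case.} Suppose $W=W_1\oplus\cdots\oplus W_r$ with $r\ge2$ blocks permuted transitively by $\Gamma$. Choose such a decomposition with $r$ minimal and $r\ge 2$. Let $\Gamma_1$ be the stabilizer of $W_1$; it has index $r$ and acts irreducibly on $W_1$. Irreducibility holds because $W\cong\Ind_{\Gamma_1}^{\Gamma}W_1$ and $\Gamma$ is irreducible. The image $\bar\Gamma_1\le\GL(W_1)$ satisfies $\Omega(\bar\Gamma_1)\le\Omega(\Gamma)$, since eigenvalues of $\gamma|_{W_1}$ are eigenvalues of $\gamma$. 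By induction there is $0\ne w\in W_1$ with $|\Gamma_1 w|\le\Omega(\Gamma)\exp(c_2(\log(2n/r))^2)$. Hence $|\Gamma w|\le r\,|\Gamma_1 w|$. It then suffices to check that
\[
\log r+c_2(\log(2n/r))^2\le c_2(\log 2n)^2 .
\]
Expanding, this amounts to $\log r\le c_2(2\log 2n\log r-(\log r)^2)$, which holds since $\log r\le\log n<\log 2n$ and $c_2\ge1$.

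\emph{Primitive case.} Apply Lemma \ref{lem3e} to obtain $N=N_1\times\cdots\times N_t\trianglelefteq\Gamma$ with $N_i\cong A_{m_i}$ and $[\Gamma:ZN]\le n^{2\log_2 n+5}=\exp(O((\log 2n)^2))$. By Clifford theory, $W|_{N}$ is a sum of conjugates of one irreducible $N$-module $X=X_1\otimes\cdots\otimes X_t$ with $\dim X\le n$. Since $Z$ acts by scalars, $W|_{ZN}$ is isotypic up to scalars.

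For each $i$ with $X_i$ nontrivial, Lemma \ref{lem3d} gives $H_i\le A_{m_i}$ with $X_i^{H_i}\ne0$ and $[A_{m_i}:H_i]\le\exp(c_1(\log 2a_i)^2)$, where $a_i=\dim X_i\ge 2$. For trivial $X_i$, take $H_i=N_i$. Set $H=\prod H_i$. Then $X^H\ne0$, so $W^H\ne0$. The product of the indices is at most $\exp(c_1\sum_i(\log 2a_i)^2)$. Since $\prod a_i\le n$ and each $a_i\ge2$ for the nontrivial factors, we have $\sum_i(\log 2a_i)^2\le 4(\sum_i\log a_i)^2\le 4(\log 2n)^2$. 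Now take $0\ne w\in W^H$. Its orbit under $N$ has size at most $[N:H]$. The group $Z$ consists of scalars $\zeta$ of order at most $\Omega(\Gamma)$, so it multiplies the orbit by at most $\Omega(\Gamma)$. Cosets of $ZN$ in $\Gamma$ add the factor $[\Gamma:ZN]$. Altogether,
\[
|\Gamma w|\le[\Gamma:ZN]\cdot|Z|\cdot[N:H]\le\Omega(\Gamma)\exp(C(\log 2n)^2),
\]
using $|Z|\le\Omega(\Gamma)$ because $Z$ is a cyclic group of scalars.

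The main obstacle I expect is making this combination precise in the primitive case: $N$ may act with multiplicity and through several conjugate constituents, and the fixed vector should be chosen inside one isotypic piece. Carrying the constants so that the primitive-case constant $C$ is at most $c_2$ is then routine. Choose $c_2\ge\max(C,1)$, and note that the primitive bound does not go through the induction, since it is applied only once at the bottom of each branch. The induction therefore closes.
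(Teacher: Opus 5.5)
Your proposal is correct and follows essentially the same route as the paper. The imprimitive case inducts via the block stabilizer with the same $\log r$ bookkeeping, and the primitive case uses Weisfeiler's normal subgroup $N$, Lemma \ref{lem3d} on each alternating factor, and the estimate $|\Gamma\cdot w|\le[\Gamma:ZN]\,|Z|\,[N:H]$ with $|Z|\le\Omega(\Gamma)$.

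The obstacle you flag is not a real one. Any irreducible constituent $X\subseteq W|_N$ with $X^H\ne0$ already gives $W^H\ne0$. Moreover, primitivity makes $W|_N$ isotypic, which is how the paper shows that no $X_i$ is trivial; you instead take $H_i=N_i$ for trivial factors, which also works.
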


\begin{proof}
Use induction on $n$. If $n=1$, then $\Gamma$ is a finite subgroup of $\bC^\times$, so every nonzero vector has orbit size $|\Gamma|=\Omega(\Gamma)$. 
Suppose that $\Gamma$ is imprimitive. Write $W=W_1\oplus\cdots\oplus W_r$ ($r \ge 2$), where the $W_i$ are permuted transitively by $\Gamma$. Let $s:=\dim W_1=n/r$, $\Gamma_1$ be the stabilizer of $W_1$ and $\ol{\Gamma}_1$ be its image in $\GL(W_1)$. The group $\ol{\Gamma}_1$ acts irreducibly on $W_1$. Indeed, if $0\ne U\subsetneq W_1$ were $\ol{\Gamma}_1$-invariant, then the sum
$\sum_{\gamma\in\Gamma}\gamma U$ would be a nonzero proper $\Gamma$-invariant subspace of $W$, contradicting the irreducibility of the $\Gamma$-action on $W$. 
Moreover, every eigenvalue of the restriction of an element of $\Gamma_1$ to $W_1$ is also an eigenvalue of that element on $W$, which gives $\Omega(\ol{\Gamma}_1)\le\Omega(\Gamma)$.

By the induction hypothesis, there exists $0\ne w\in W_1$ such that
$$
|\ol{\Gamma}_1\cdot w|
\le\Omega(\ol{\Gamma}_1)\exp(c_2(\log 2s)^2)
\le\Omega(\Gamma)\exp(c_2(\log 2s)^2).
$$
If $\gamma\in\Gamma$ fixes $w$, then $w \in \gamma(W_1) \cap W_1$. Since $\Gamma$ permutes the summands $W_1,\ldots,W_r$, we have
$\gamma(W_1)=W_1$ and hence $\gamma\in\Gamma_1$.
Thus the stabilizer of $w$ lies in $\Gamma_1$ and $|\Gamma\cdot w|=r|\ol{\Gamma}_1\cdot w|$.
Since
$$
(\log 2n)^2-(\log 2s)^2 
=\log r\bigl(\log 2n+\log 2s\bigr)
\ge (2 \log2)\log r,
$$
after taking $c_2\ge(2\log2)^{-1}$, we have 
$$
|\Gamma \cdot w| = r|\ol{\Gamma}_1\cdot w| \le r\Omega(\Gamma)\exp(c_2(\log 2s)^2) \le \Omega(\Gamma)\exp(c_2(\log 2n)^2).
$$

Hence we may and will assume that $\Gamma$ is primitive. Let $Z:=Z(\Gamma)$ and choose $N=N_1\times\cdots\times N_t$ as in Lemma \ref{lem3e}. 
By Clifford's theorem \cite[Theorem 6.2]{Isa76}, there are pairwise nonisomorphic irreducible $N$-representations $Y_1,\ldots,Y_l$ and an integer $u\ge 1$ such that $Y_1,\ldots,Y_l$ are precisely the conjugate representations $Y_1^\gamma$ ($\gamma\in\Gamma$) up to isomorphism, and
$$
W|_N\cong\bigoplus_{j=1}^l Y_j^{\oplus u}.
$$
Here, $Y_1^\gamma(n):=Y_1(\gamma n\gamma^{-1})$ for $\gamma\in\Gamma$ and $n\in N$.
If $l \ge 2$, then the components $Y_j^{\oplus u}$ ($1 \le j \le l$) are permuted transitively by $\Gamma$, which contradicts the primitivity of $\Gamma$. Thus $l=1$ and $W|_N\cong X^{\oplus u}$ for some irreducible $N$-representation $X$.

Iterating \cite[Section 3.2, Theorem 10(ii)]{Ser77}, we may write
$$
X\cong X_1\boxtimes\cdots\boxtimes X_t,
$$
where $X_i$ is an irreducible $N_i$-representation.
No $N_i$ acts trivially on $X$, since otherwise $N_i$ would act trivially on $W$. This is impossible because the inclusion $N_i\le\Gamma\le\GL(W)$ makes the action of $N_i$ on $W$
faithful. Since $N_i \cong A_{m_i}$ is a perfect group, it has no nontrivial one-dimensional representations. Thus 
\begin{equation} \label{eq3i1}
a_i:=\dim X_i \ge 2 \quad \text{and} \quad \prod_{i=1}^ta_i=\dim X\le n.
\end{equation}

For each $i$, Lemma \ref{lem3d} gives a subgroup $H_i\le N_i$ such that $X_i^{H_i}\ne0$ and
\begin{equation} \label{eq3i2}
[N_i:H_i]\le\exp(c_1(\log 2a_i)^2).
\end{equation}
Set $H_N:=H_1\times\cdots\times H_t$, where $H_N=1$ if $t=0$. Then $X^{H_N} \cong X_1^{H_1}\otimes\cdots\otimes X_t^{H_t} \ne0$, so $W^{H_N}\ne0$. Furthermore, \eqref{eq3i1} and \eqref{eq3i2} imply that
\begin{equation} \label{eq3i3}
[N:H_N]
\le\exp\left(c_1\sum_{i=1}^t(\log 2a_i)^2\right)
\le\exp\left(c_1 \left(\sum_{i=1}^t 2\log a_i\right)^2 \right)
\le\exp(4c_1(\log n)^2).
\end{equation}

Choose $0\ne w\in W^{H_N}$. By Schur's lemma \cite[Section 2.2, Proposition 4]{Ser77}, $Z$ acts on $W$ by scalars. Since $\Gamma \le \GL(W)$, this action is faithful. Hence $Z$ is cyclic, as every finite subgroup of $\bC^\times$ is cyclic. A generator of $Z$ acts by an eigenvalue of order $|Z|$, so $|Z|\le\Omega(\Gamma)$.
By Lemma \ref{lem3e}, \eqref{eq3i3} and the inequality $|Z|\le\Omega(\Gamma)$, we have
\begin{align*}
|\Gamma\cdot w|
&\le[\Gamma:H_N] \\
&=[\Gamma:ZN][ZN:N][N:H_N] \\
&\le[\Gamma:ZN]|Z|[N:H_N] \\
&\le \Omega(\Gamma) n^{2\log_2 n+5}\exp(4c_1(\log n)^2) \\
&\le \Omega(\Gamma)\exp(c_2(\log 2n)^2)
\end{align*}
for an absolute constant $c_2>0$. This completes the proof.
\end{proof}

\begin{proof}[Proof of Theorem \ref{thm1b}]
Let $V:=U^\vee$ and choose an irreducible complex constituent $W\subset V_{\bC}:=V\otimes_{\bQ}\bC$. Write $n:=\dim_{\bC}W\le d$ and let $\Gamma$ be the image of $G$ in $\GL(W)$. Choose $\gamma\in\Gamma$ whose eigenvalue $\alpha$ satisfies $\mr{ord}(\alpha)=r := \Omega(\Gamma)$, and let $g\in G$ be a preimage of $\gamma$. 
Since $W$ is a $G$-subrepresentation of $V_{\bC}$, $\alpha$ is also an eigenvalue of $g$ acting on $V_{\bC}$. As the action of $g$ on $V$ is defined over $\bQ$ and $\alpha$ has order $r$, the minimal polynomial of $\alpha$ over $\bQ$ is the $r$-th cyclotomic polynomial $\Phi_r$. 

Let $P_g(x) \in \bQ[x]$ be the characteristic polynomial of $g$ acting on $V$. Then $\deg P_g = d$ and $P_g(\alpha)=0$, so $d \ge \deg \Phi_r = \phi(r)$. A simple inequality $r \le 2 \phi(r)^2$ gives $\Omega(\Gamma) =r \le 2 \phi(r)^2 \le 2d^2$. 
By Lemma \ref{lem3f}, there is $0\ne w\in W$ such that
$$
|G\cdot w|=|\Gamma\cdot w| \le 2d^2\exp(c_2(\log 2d)^2).
$$

Let $K:=\{g\in G:g\cdot w=w\}$. Then $w \in (V_{\bC})^K$ so $(V_{\bC})^K\ne0$. Moreover, $V^K=\bigcap_{g\in K}\ker(g-\mr{id}_V)$ and similarly, $(V_{\bC})^K=\bigcap_{g\in K}\ker(g-\mr{id}_{V_{\bC}})$. Since the map $g-\mr{id}_{V_{\bC}}$ is obtained from $g-\mr{id}_V$ by extension of scalars, we have $V^K\otimes_{\bQ}\bC \cong (V_{\bC})^K\ne0$ and hence $V^K\ne0$. Now $0\ne\ell\in V^K$ satisfies
$$
|G\cdot\ell| \le[G:K] =|G\cdot w| \le 2d^2\exp(c_2(\log 2d)^2).
$$
There is a constant $C>0$ such that $2d^2\exp(c_2(\log 2d)^2) \le \exp(C(\log 2d)^2)$ for every $d \ge 1$.
\end{proof}

Finally, we prove Theorem \ref{thm1a} by combining Theorems \ref{thm1b} and \ref{thm1c}.
\begin{proof}[Proof of Theorem \ref{thm1a}]
Let $C>0$ be as in Theorem \ref{thm1b} and set $A(d):=\exp(C(\log 2d)^2)$. Fix a finite nontrivial subgroup $H$ of $\GL_n(\bZ)$ and retain the notation of Section \ref{Sub12}. If $d_i:=\dim W_i$, then $\mu_i\le A(d_i)\le A(n)$ by Theorem \ref{thm1b}, hence $\Theta(H,V)\le\frac{A(n)}{2}$ and $s(H,V)-1 \le n-1$. By Theorem \ref{thm1c},
$$
N_n^{\tor}(X;H)\ll_H X^{A(n)/2}(\log X)^{n-1} \ll_H X^{\exp(4C(\log n)^2)}
$$
for every $n \ge 2$. Now \eqref{eq1a2} completes the proof.
\end{proof}

\section*{Acknowledgments}
The author was supported by the National Research Foundation of Korea (NRF) grant funded by the Korea government (MSIT) (No. RS-2024-00334558 and No. RS-2025-02262988).

\section*{Statement on AI use}

OpenAI’s ChatGPT 5.6 Pro generated initial proofs for several central arguments in this paper, including the main representation-theoretic result and its application to counting algebraic tori over $\bQ$. 
These arguments were developed through an iterative dialogue: the model initially proposed a weaker bound than the one stated in Theorem \ref{thm1a}, which was substantially improved through repeated prompting and critical revision. 
The author formulated the project, evaluated and revised the model's suggestions, independently verified all mathematical arguments, and wrote the final manuscript. The author takes full responsibility for the correctness of the results.


\end{document}